\newtheorem{teo}{Theorem}[section]
\newtheorem{prop}[teo]{Proposition}
\newtheorem{lem}[teo]{Lemma}
\newtheorem{coro}[teo]{Corollary}
\newtheorem{rem}[teo]{Remark}
\def\h{{\cal H}}
\def\b{{\cal B}}
\def\bh{{\cal B}({\cal H})}
\def\a{\mathcal A}
\begin{document}

\title{\vspace*{0cm}The left invariant metric in the general linear group\footnote{2010 MSC. Primary 58B20, 53C22;  Secondary 47D03, 70H03, 70H05.}}
\date{}

\author{E. Andruchow, G. Larotonda, L. Recht, A. Varela}

\maketitle

\abstract{\footnotesize{\noindent Left invariant metrics induced by the $p$-norms of the trace in the matrix algebra are studied on the general lineal group. By means of the Euler-Lagrange equations, existence and uniqueness of extremal paths for the length functional are established, and regularity properties of these extremal paths are obtained. Minimizing paths in the group are shown to have a velocity with constant singular values and  multiplicity. In several special cases, these geodesic paths are computed explicitly. In particular the Riemannian geodesics, corresponding to the case $p=2$, are characterized as the product of two one-parameter groups.  It is also shown that geodesics are one-parameter groups if and only if the initial velocity is a normal matrix. These results are further extended to the context of compact operators with $p$-summable spectrum, where a differential equation for the spectral projections of the velocity vector of an extremal path is obtained.}\footnote{{\bf Keywords and phrases: general linear group, left-invariant metric, $p$-norm, trace, Finsler metric, geodesic, minimal path, generalized rigid body problem.} }}

\setlength{\parindent}{0cm} 

\section{Introduction}\label{intro1}

The object of this paper is to study the Finsler metric in the general linear group $G=GL(N)$ given by left translation of the $p$-norm induced by the trace in the space of $N\times N$ complex matrices. Namely, since $GL(N)$ is open in the space $M_N(\mathbb{C})$,  the tangent space of $GL(N)$ at any point identifies with  $M_N(\mathbb{C})$, and if $x\in M_N(\mathbb{C})$ is regarded as a tangent vector at $g\in GL(N)$, then the metric we consider is given by
$$
\|x\|_g=\|g^{-1}x\|_p=\tau((x^*(g^{-1})^*g^{-1}x)^n)^{1/p},
$$
where $p=2n$ is a fixed even integer, and $\tau$ is the {\it normalized real part} of the trace.

This metric is Riemannian when $p=2$, and as noted by V. I. Arnold \cite[Section 2]{arnold}, it is the natural metric in the study of the Lie group of movements of the generalized rigid body problem.  Thus  the geodesics of this group obey the least action principle for the kinetic energy of the body. In particular, one-parameter groups (which we will show that correspond to normal initial speeds in Section \ref{normales}) are the rotations of the rigid body around its major axes in the moment of inertia ellipsoid.

If $g(t)$, $T\in[0,1]$ is a smooth curve in $G$, the length functional is defined by
$$
{\cal L}_p(g)=\int_0^1 \|g^{-1}(t)\dot{g}(t)\|_p dt,
$$
and the distance between $g,h\in GL(N)$ is defined as the infima of the lengths of piecewise smooth curves joining $g,h$ in $GL(N)$.

Our goal is to characterize and establish the existence and uniqueness of smooth ($C^1,C^{\infty}$, etc.) minimizing paths for this metric, studying the Euler-Lagrange equation of the $p$-energy functional
$$
{\cal E}_p(g)=\int_0^1 \|g^{-1}(t)\dot{g}(t)\|_p^p dt,
$$
for $g(t)\in GL(N)$ a smooth curve parametrized in the interval $[0,1]$.

With the notable exception of the case $p=2$ (which corresponds to the Riemannian situation), the $p$-energy and the $p$-length functional are degenerate, in the sense that the second variation of each functional is nonnegative, but has directions of degeneracy where it vanishes. In fact, for $p>2$ and any initial condition $v\in M_n(\mathbb C)$, there are plenty of directions of degeneracy, unless $v$ is nonsingular (i.e. invertible). Thus this manifold lies beyond the reach of the standard techniques of Finsler geometry, and in particular the existence of  geodesic neighbourhoods is not guaranteed.

This paper is divided into two main parts, concerning matrix algebras and compact operators on a separable complex Hilbert space, and it is organized as follows. Suppose that a smooth curve $g(t)\in G$, $t\in[0,1]$ is given. Denote by
$$
v(t)=g^{-1}(t)\dot{g}(t)
$$
the left translation of the velocity vectors. In Section \ref{energia} it is shown that $g$ is an extremal of the $p$-energy functional, if and only if $v$ satisfies the differential equation
$$
\frac{d}{dt} v(v^*v)^{n-1}=(v^*v)^n-(vv^*)^n,
$$
which we call the {\it Euler-Lagrange equation} (of the $p$-metric). If $p>2$ or if $v$ is non-normal, this equation is difficult to deal with. Using the one-to-one and smooth map
$$
v\mapsto w:=v(v^*v)^{n-1},
$$
which we call the {\it Legendre transformation}, this equation becomes the {\it Hamilton} equation:
$$
\dot{w}=(w^*w)^{q/2}-(ww^*)^{q/2}=|w|^q-|w^*|^q,
$$
where $\frac1q+\frac1p=1$ and $|z|=\sqrt{z^*z}$ is the unique positive square root of the positive matrix $z^*z$. Solutions $w$ of this equation have the remarkable property that the eigenvalues and multiplicities of $w^*w$ (and of $ww^*$) are constant with respect to $t$. This fact implies that the Hamilton equation has (unique) solutions defined for all $t\in\mathbb{R}$. Then we reverse the Legendre transform to prove that solutions $v=v(t)$  exist for any initial data, they are unique and $C^{\infty}$. In Section \ref{minimizantes}, we establish some basic facts concerning the rectifiable distance $\ell_p$ for continuous maps, and its relation with the $p$-length functional: rectifiable arcs are almost everywhere differentiable, and minimal rectifiable arcs have constant speed (where it exists). Then, minimal curves for the rectifiable distance are minimal points of the $p$-energy (a simple consequence of H\"older's inequality) and thus fulfill the Euler-Lagrange equation, which forces them to be $C^{\infty}$. By means of Cohn-Vossen's theorem for locally compact geodesic length spaces, we prove that for given $g_1,g_2\in G$, there exists a unique $C^{\infty}$ curve of minimal length, such that the left translation of its velocity vectors verifies the Euler-Lagrange equation. Under certain special conditions of the initial speed, these curves are computed for any $p=2n$. In Section \ref{schatten}, we study the classical Banach-Lie groups whose Banach-Lie algebras are compact operators with $p$-summable singular values ($p$-Schatten operators), and we establish the smoothness ($C^1$ in this case) of the critical points of the rectifiable length, and obtain a differential equation of the spectral projections of these extremal curves.
\section{Matrix algebras}

In this section, we work in the algebra of $N\times N$ complex matrices ${\mathcal A}=M_N(\mathbb C)$, and we denote with $G=GL(N)$ the open subgroup of invertible matrices. Its tangent space at the identity (the Lie algebra) is identified with the full-matrix algebra, and we consider several norms induced by the standard, normalized ($\tau(1)=1$) real part of the trace of the algebra, $\tau:\mathcal A\to \mathbb R$. Consider
$$
\|x\|_p^p=\tau|x|^p\mbox{ for any }p\ge 1,
$$
which is called the {\it $p$-norm}. In what follows, $|x|=\sqrt{x^*x}$. The {\it uniform} norm  is given by $\|x\|=\|x\|_{\infty}=\max\{ \|x\xi\|: \, \xi \in\mathbb{C}^N,\, \|\xi\|\le 1\}$. It is also called the {\it supremum} norm, or the {\it spectral} norm. It is worth mentioning here, that for any $x\in {\mathcal A}$, and $1\le r\le s$, it holds that
$$
\|x\|_1\le \|x\|_r\le \|x\|_s\le \|x\|_{\infty}
$$
and also that the uniform norm can be arbitrarily approximated by the $p$-norms for $p$ sufficiently large.

\subsection{Euler-Lagrange equations and critical points}\label{energia}

We establish the existence and uniqueness of extremal paths for the $p$-energy functional. The computations are only carried out for even $p$. We postpone to Section \ref{minimizantes} the relationship with the $p$-length functional, and the study of minimizing properties of the extremal curves.

\subsubsection{Variational calculus for the $p$-norms}

Let $p\ge 2$ be an even integer and put $n=p/2$. Consider the $p$-{\it energy} functional
$$
{\cal E}_p(g)=\int_0^1 \|v(t)\|_p^p dt=\int_0^1 \tau(v(t)^*v(t))^n dt,
$$
for a smooth curve $g(t)$ ($0\le t\le 1$) in the group $G$, where we put $v(t)=g^{-1}(t)\dot{g}(t)$, which is the left translation to the identity of $G$, of the velocity vector of $g$.

This functional is invariant for the left action of $G$: if we put $h(t)=kg(t)$ for some $k\in G$, we have $\dot{h}(t)=k\dot{g}(t)$ and $h^{-1}(t)\dot{h}(t)=g^{-1}(t)\dot{g}(t)$.

We assume that $g=g_s$ depends smoothly upon a parameter $s\in (-\varepsilon,\varepsilon)$, and we will use the apostrophe $\,'$ to denote the derivative with respect to the $s$-parameter. Let $w=g^{-1}g'$ and $v=g^{-1}\dot{g}$. Now we compute $\frac{\partial}{\partial s}\tau(v^*v)^n$, we will use the formula
$$
v'=\dot{w}+[v,w],
$$
that follows interchanging derivatives and using that $dx^{-1}=-x^{-1}(dx)x^{-1}$. Thus
$$
(v')^*=\dot{w}^*+[w^*,v^*].
$$
We have
$$
(\tau(v^*v)^n)'=n\tau(v^*v)^{n-1}(v^*v)',
$$
so we compute
\begin{eqnarray}
(v^*v)'& =& (v')^*v+v^*v'=(\dot{w}^*+[w^*,v^*])v+v^*(\dot{w}+[v,w])\nonumber\\
&=& (\dot{w}^*v+v^*\dot{w}) +([w^*,v^*]v+v^*[v,w]).\nonumber
\end{eqnarray}
Finally,
$$
\dot{w}^*v+v^*\dot{w}=\frac{d}{dt}(w^*v+v^*w)-(w^*\dot{v}+\dot{v}^*w).
$$
Note that, since $\tau$ indicates the real part of the trace, then $\tau(x)=\tau(x^*)$. For the time being, we have three terms
\begin{enumerate}
 \item $2n\tau (v^*v)^{n-1}v^*[v,w]$
\item $-2n\tau (v^*v)^{n-1}w^*\dot{v}$
\item $2n\tau (v^*v)^{n-1} \frac{d}{dt}(w^*v)$.
\end{enumerate}
The last term can be rewritten as follows:
$$
3'. \quad p\tau(v^*v)^{n-1}\frac{d}{dt}w^*v=p\frac{d}{dt}\tau(v^*v)^{n-1}w^*v-p\tau(\frac{d}{dt}(v^*v)^{n-1} ) w^*v.
$$
If we put together the second term with this last expression, we get
$$
-p\frac{d}{dt}\tau(v^*v)^{n-1}w^*v-p\tau(\frac{d}{dt}(v^*v)^{n-1} ) w^*v=-p\tau(\frac{d}{dt}v(v^*v)^{n-1})w^*.
$$
Hence
$$
\frac1p \frac{\partial}{\partial s}\tau(v^*v)^n=\frac{d}{dt}\tau(v(v^*v)^{n-1})w^*-\tau(\frac{d}{dt}v(v^*v)^{n-1})w^*+\tau(v^*v)^n-(vv^*)^n)w^*.
$$
The last term comes from the bracket and its adjoint. Now
$$
\frac{\partial }{\partial s}{\cal E}_p(g_s)=\langle v(v^*v)^{n-1},w\rangle\bigg|_{0}^1-\int_0^1 \langle \frac{d}{dt}v(v^*v)^{n-1},w\rangle dt+\int_0^1\langle (v^*v)^n-(vv^*)^n,w\rangle dt,
$$
where $\langle,\rangle$ denotes the inner product of ${\mathcal A}$ induced by the (real part of) the trace
$$
\langle x,y\rangle =\tau(y^*x).
$$
This is the first variation formula for the functional ${\cal E}_p$. If we consider variations $w(t,s)$ with fixed endpoints $w(0,s)=w(1,s)=0$, then the condition for $g$ to be an extremal point is
\begin{equation}\label{variational}
\frac{d}{dt} v(v^*v)^{n-1}=(v^*v)^n-(vv^*)^n.
\end{equation}
This is the Euler-Lagrange equation of our variational problem. In order to deal with it, we shall rewrite it as follows. Let $\frac1q+\frac1p=1$, i.e. $q$ is the conjugate exponent of $p$ (recall that $p\ge 2$ is an even integer). Note that $\frac{2n}{2n-1}=\frac{p}{p-1}=q$.

Consider the Legendre transformation given by
\begin{equation}\label{legendre transformation}
v\mapsto w=v(v^*v)^{n-1}
\end{equation}
and note that
\begin{equation}\label{modu}
(v^*v)^n=(w^*w)^{\frac{n}{2n-1}}\hbox{ and }  (vv^*)^n=(ww^*)^{\frac{n}{2n-1}}.
\end{equation}
Then the Hamilton equation  is
\begin{equation}\label{euler}
\dot{w}=|w|^q-|w^*|^q,
\end{equation}
where $|w|=\sqrt{w^*w}$ and $|w^*|=\sqrt{ww^*}$. In particular, $\dot{w}$ is self-adjoint, or stated in other way, the skew-adjoint part of $w$ must be constant.

\bigskip

In order to establish existence and uniqueness of solutions of the Hamilton equation, it suffices to show  that the map $a \mapsto |a|^q$ is locally Lipschitz. Since we are interested in solving this equation in two contexts, we recall here a result which covers both. The proof is based on results in \cite{davies} and  \cite{ps} (see also \cite{delasalle}).

\begin{rem}\label{lipschitz}
Let $1<r<\infty$, $a,b\in\mathcal A$ where  $\mathcal A$ is either a matrix algebra or the (unitized) ideal of compact $p$-Schatten operators (see Section \ref{schatten}). Then there exists a constant $c(r,d)>0$, such that if $\|a\|_r,\|b\|_r\le d$, then
$$
\||a|^r-|b|^r\|_r\le c(r,d)\|a-b\|_r.
$$
Indeed, E.B. Davies proved in \cite{davies} that if $a,b$ are  operators such that $a-b$ belongs to the $r$-Schatten class, then $|a|-|b|$ also belongs to this class, and there exists a constant $c_1(r)$ such that
$$
\||a|-|b|\|_r\le c_1(r)\|a-b\|_r.
$$
On the other hand, D. Potapov and F. Sukochev proved in \cite{ps} (see also \cite{delasalle}), that if $f:\mathbb{R}\to\mathbb{R}$ is a Lipschitz map, and $x,y$ are self-adjoint operators in the $r$-Schatten class, then $f(x)-f(y)$ belongs to the $r$-Schatten class, and there exists a constant $c_2(r)$ such that
$$
\|f(x)-f(y)\|_r\le c_2(r)\|x-y\|_r.
$$
Our assertion follows considering the function $f_d:\mathbb{R}\to\mathbb{R}$ given by
$$
f_d(t)=\left\{ \begin{array}{l} |t|^r \hbox{ if } |t|\le d \\
d \frac{t}{|t|} \hbox{ if } |t|>d. \end{array} \right.
$$
Clearly $f_d$ is a Lipschitz map, and if $x\in \b(\h)$ verifies that $\|x\|_r\le d$, $f_d(|x|)=|x|^r$. Our claim therefore follows.
\end{rem}

Note that, while in the matrix algebra $\mathcal A=M_N(\mathbb C)$ we are dealing with the topology induced by the uniform norm (all norms being equivalent for fixed $N$), one obtains that $a\mapsto |a|^q$ is locally Lipschitz for the uniform norm, with a constant depending only on $p,N$ and the radius of the ball where one wishes to obtain the estimate.

By the remark above, the Hamilton equation (\ref{euler}) has a continuously differentiable local solution $w:(t_-,t_+)\to {\mathcal A}$ for any initial condition $w(0)\in {\mathcal A}$ by the standard result for ODE's on Banach spaces (see for instance \cite[Chapter IV]{lang}). Moreover, the flow of solutions is a Lipschitz map with respect to the initial condition $w_0$.

\subsubsection{Special case: normal speed}\label{normales}

The equation (\ref{euler}) shows that the extremals of the variational problems are the one-parameter groups $g(t)=e^{tv}$ ($v\in {\mathcal A}$) if and only if $v$ is a normal element of ${\mathcal A}$. Indeed, if $g(t)=e^{tv}$, then $v(t)=g^{-1}(t)\dot{g}(t)\equiv v$, thus $g$ is an extremal if and only if
$$
(vv^*)^n=(v^*v)^n,
$$
which is  equivalent to $vv^*=v^*v$. Note that the set of such speeds in the algebra $\mathcal A$ is closed, but not a submanifold, since $f(x)=x^*x-xx^*$ has vanishing differential at $x=1$.

\subsubsection{Special case: partial isometries as speeds}

If the initial condition $v_0$ is a partial isometry, that is $v_0v_0^*$ and $v_0^*v_0$ are projections, then for any $p=2n$,
\begin{equation}\label{solu}
g(t)=g_0e^{tv_0^*}e^{t(v_0-v_0^*)},
\end{equation}
is the unique solution of the variational problem with $g(0)=g_0$, $\dot{g}(0)=g_0v_0$. Indeed, let $v_0=x_0+iy_0$ with $x_0,y_0$ self-adjoint, and define
$$
v(t)=g^{-1}\dot{g}(t)=e^{-t(v_0-v_0^*)}v_0e^{t(v_0-v_0^*)}=e^{-2ity_0}v_0e^{2ity_0}=e^{-2ity_0}x_0e^{2ity_0}+iy_0.
$$
Then, since $v$ is a curve of partial isometries,  $(v^*v)^n=v^*v$, $(vv^*)^n=vv^*$ and $v(v^*v)^{n-1}=v$. Hence a direct computation shows that
$$
\dot{v}=e^{-2ity_0}[-2iy_0,x_0]e^{2ity_0}=v^*v-vv^*
$$
which implies that $v$ is a solution of the Euler-Lagrange equation (\ref{variational}).

Note that the solution can be rewritten as
$$
g(t)=g_0e^{t(x_0-iy_0)}e^{2tiy_0}.
$$

\subsubsection{The spectrum of the velocity vector}

Returning to the general situation where $v_0$ is not a normal operator nor a partial isometry, note that
\begin{eqnarray}
 (w^*w)\dot{ }&=&(|w|^q-|w^*|^q)w+w^*(|w|^q-|w^*|^q)\nonumber\\
&=& |w|^qw-w|w|^q+w^*|w|^q-|w|^qw^*=2\ Sym[|w|^q,w],\nonumber
\end{eqnarray}
where $Sym(x)=\frac12(x+x^*)$.
Let $k\in {\mathcal A}$ be the skew-adjoint part of $w$, which as noted above, is constant. Then
$$
Sym[|w|^q,w]= [|w|^q,k].
$$
So, if we put $a=|w|$,
$$
(a^2)\dot{ }=2[a^q,k].
$$
Fix an element $K^*=-K\in\a$. The following auxiliary equation will be useful:
\begin{equation} \label{ecuacionauxiliar}
\dot{b}=[b^{\alpha},K]=b^{\alpha}K-Kb^{\alpha},
\end{equation}
where $\alpha=q/2$. Note that if $w$ is a solution of the Hamilton equation, then $b=|w|^2$ is a solution of (\ref{ecuacionauxiliar}) for $K=2k$.

\begin{lem}
If a continuously differentiable curve $b(t)$ of positive elements of $\a$ is a solution of equation (\ref{ecuacionauxiliar}) and $s\ge 1$, then $\|b(t)\|_s$ is constant and the eigenvalues and multiplicities of $b(t)$ do not depend on $t$. In particular, $\|b(t)\|$ is also constant.
\end{lem}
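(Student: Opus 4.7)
The natural plan is to differentiate moments of $b$ and to use that powers of a positive matrix commute with each other, together with the cyclicity of $\tau$.

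First, I would compute $\frac{d}{dt}\tau(b^k)$ for each positive integer $k$. Since $b(t)>0$ is $C^1$ and $b^k$ is a polynomial in $b$, we may differentiate term by term to get $\frac{d}{dt}\tau(b^k)=k\,\tau(b^{k-1}\dot b)$ by the cyclicity of $\tau$ (note that $\tau$, being the real part of the trace, still satisfies $\tau(xy)=\tau(yx)$). Plugging in the equation $\dot b=[b^\alpha,K]$,
$$
\frac{d}{dt}\tau(b^k)=k\,\tau(b^{k-1}b^\alpha K)-k\,\tau(b^{k-1}Kb^\alpha)=k\,\tau(b^{k-1+\alpha}K)-k\,\tau(b^\alpha b^{k-1}K)=0,
$$
using in the middle equality that $b^{k-1}$ and $b^\alpha$ are both functions of the positive operator $b$, hence commute, and in the second one the cyclicity of $\tau$. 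So every integer moment $\tau(b^k)$ is constant in $t$.

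Next, in the matrix algebra $\mathcal A=M_N(\mathbb{C})$, the power sums $\mathrm{Tr}(b^k)$ for $k=1,\dots,N$ determine (by Newton's identities) the elementary symmetric functions of the eigenvalues, and therefore the characteristic polynomial of $b(t)$. Thus the multiset of eigenvalues of $b(t)$, i.e.\ the eigenvalues together with their multiplicities, is independent of $t$.

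Finally, any $p$-norm of a positive matrix depends only on this multiset: $\|b(t)\|_s^s=\tau(b(t)^s)=\frac1N\sum_j\lambda_j(t)^s$ for real $s\ge 1$, and $\|b(t)\|_\infty=\max_j\lambda_j(t)$. Both are therefore constant in $t$.

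The derivation is essentially one line once one observes that $b^{k-1}$ commutes with $b^\alpha$; there is no serious obstacle in the matrix case. The only point that needs care is passing from ``all moments constant'' to ``spectrum with multiplicities constant,'' which I would settle by invoking Newton's identities in dimension $N$; this is also what would force extra work in the $p$-Schatten setting later, where the finite system of moments no longer determines the whole spectrum.
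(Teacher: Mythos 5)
Your proof is correct. The first half coincides with the paper's: differentiating $\tau(b^k)$ and using cyclicity of $\tau$ together with the fact that $b^{k-1}$ and $b^\alpha$ commute gives constancy of all moments. Where you diverge is in converting constant moments into constant spectrum with multiplicities: you invoke Newton's identities, so that the power sums $\mathrm{Tr}(b^k)$, $k=1,\dots,N$, pin down the characteristic polynomial of $b(t)$, whence eigenvalues and multiplicities are constant and all the norms follow. The paper instead passes from constant moments to constancy of $\tau(f(b(t)))$ for every continuous $f$ (Weierstrass approximation on a fixed compact interval containing the spectra), obtains $\|b(t)\|_s$ directly from $f(t)=|t|^s$, and rules out spectral variation with a bump-function positivity argument: if $\lambda\in\sigma(b(t_0))\setminus\sigma(b(t_1))$, a continuous $f$ with $f(\lambda)=1$ and $f\equiv 0$ on $\sigma(b(t_1))$ forces $f(b(t_0))=0$ while $\|f(b(t_0))\|=1$. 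Your route is more elementary and handles multiplicities in one stroke, but it is intrinsically finite-dimensional, as you yourself note; the paper's functional-calculus argument is the one that survives (in modified form, via Riesz projections and local trace computations) in the $p$-Schatten setting of Section \ref{schatten}, where finitely many moments no longer determine the spectrum. Both arguments are complete for the matrix case, so there is no gap.
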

\begin{proof}
For each $n\in \mathbb N_0$,
$$
\frac{d}{dt}\tau(b^n)=n\tau(\dot{b}b^{n-1})=n\tau(b^{\alpha}Kb^{n-1}-Kb^{\alpha}b^{n-1})=0,
$$
Therefore, if $p$ is a polynomial, $\tau(p(b(t)))$ is constant. It follows that if $f$ is any continuous function on the real line, then $\tau(f(b(t)))$ is constant. Hence, for any $s>1$, by considering $f(t)=|t|^s$, we obtain
$$
\|b(t)\|_s^s=\tau|b(t)|^s=\tau (f(b(t)))=\tau(f(b(0)))=\|b(0)\|_s
$$
for any $t\in \mathbb R$ such the solution $b$ exists.

Let us show now that the spectrum of $b(t)$ is constant. Indeed, suppose otherwise that $\lambda\in\sigma(b(t_0))$ and $\lambda \notin \sigma(b(t_1))$. Let $f$ be a continuous function in the real line such that $0\le f(t)\le 1$,  $f(\lambda)=1$ and $f$ is zero in the spectrum of $b(t_1)$. It follows that
$$
0=\tau(f(b(t_1)))=\tau(f(b(t_0))).
$$
Since $f(b(t_0))\ge 0$, this implies that $f(b(t_0))=0$. On the other hand, $\|f(b(t_0))\|=f(\lambda)=1$. Thus the spectrum is constant, and if we recall that the spectral norm of a normal (in particular, positive) matrix $x$ can be computed as the maximum of $\lambda_i$, where $\lambda_i$ are the eigenvalues of $|x|=\sqrt{x^*x}$, then clearly $b(t)$ has constant uniform norm.
\end{proof}

\begin{rem}\label{extensiondelremark}
The same conclusion follows, if one supposes that $b(t)$ is a Lipschitz map (in particular, continuous and almost everywhere differentiable) and an a.e.-solution of equation (\ref{ecuacionauxiliar}). Indeed, reasoning as above, one has that $b^n(t)$ is a.e.-differentiable, and therefore $\tau(b^n(t))$ is constant for any $n\ge 0$.
\end{rem}

\begin{teo}\label{existenciatransformed}
The Hamilton equation (\ref{euler})
$$
\dot{w}=|w|^q-|w^*|^q , \ \ w(0)=w_0
$$
 has a unique continuously differentiable solution defined for all $t\in\mathbb{R}$.
 \end{teo}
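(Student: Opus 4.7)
The strategy is the standard one for global existence of ODE: get local existence/uniqueness from a Picard-Lindel\"of argument, then obtain an a priori bound on $\|w(t)\|$ that prevents blow-up, so the maximal interval of existence is all of $\mathbb{R}$.

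First I would invoke Remark \ref{lipschitz} to conclude that the map $F:\mathcal{A}\to\mathcal{A}$ defined by $F(w)=|w|^q-|w^*|^q$ is locally Lipschitz in the uniform norm (with a constant depending only on $q$, $N$ and the radius of the ball under consideration). Indeed, all norms on $M_N(\mathbb{C})$ are equivalent, so the Lipschitz estimate in the $q$-norm gives one in the uniform norm (with a worse constant). The standard theory for ODEs on Banach spaces (e.g.\ \cite[Chapter IV]{lang}) then yields, for every initial datum $w_0\in\mathcal{A}$, a unique $C^1$ solution $w:(t_-,t_+)\to\mathcal{A}$ on a maximal open interval, together with the usual blow-up criterion: if $t_+<\infty$ then $\|w(t)\|\to\infty$ as $t\nearrow t_+$ (and likewise at $t_-$).

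Next I would produce the a priori bound. As noted right after (\ref{euler}), the skew-adjoint part $k$ of $w(t)$ is constant, because $\dot w=|w|^q-|w^*|^q$ is self-adjoint. A short computation (carried out in the lines preceding the auxiliary equation) shows that $b(t):=|w(t)|^2=w(t)^*w(t)$ is a $C^1$ curve of positive matrices solving
$$
\dot b = [b^{\alpha},K], \qquad K=2k,\ \alpha=q/2,
$$
which is exactly equation (\ref{ecuacionauxiliar}). The preceding lemma then applies and gives that the eigenvalues and multiplicities of $b(t)$ are independent of $t$; in particular $\|b(t)\|$ is constant on $(t_-,t_+)$. Since $\|w(t)\|^2=\|w(t)^*w(t)\|=\|b(t)\|$, we conclude that $\|w(t)\|$ equals the constant $\|w_0\|$ for all $t$ in the maximal interval.

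Combining this bound with the blow-up criterion rules out finite-time blow-up in either direction, so $(t_-,t_+)=\mathbb{R}$. Uniqueness is already contained in the local Picard-Lindel\"of statement, and $w\in C^1(\mathbb{R},\mathcal{A})$ because $\dot w(t)=F(w(t))$ with $F$ continuous and $w$ continuous. The only delicate point in this outline is checking that the preceding lemma is applicable on the full maximal interval rather than just on some small interval where one already has a bound; this is clean here because the lemma's hypotheses (continuous differentiability and positivity of $b$) are automatic for $b=w^*w$ along the $C^1$ solution, so the conservation of $\|b\|$ is in force throughout $(t_-,t_+)$ and the a priori bound is genuinely a priori rather than circular.
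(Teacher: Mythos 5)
Your proof is correct and follows essentially the same route as the paper: local existence and uniqueness from the Lipschitz estimate of Remark \ref{lipschitz}, together with the conservation of the spectrum (hence of the norm) of $b=w^*w$ obtained from the lemma on the auxiliary equation (\ref{ecuacionauxiliar}), which supplies the a priori bound. The only, immaterial, difference is the continuation step: you invoke the finite-dimensional blow-up criterion using the conserved uniform norm, whereas the paper notes that the local existence time depends only on the conserved $q$-norm of the initial condition and extends the solution by iterating intervals of fixed length.
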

 \begin{proof}
 Let $w(t)$ be a local solution for this problem, defined for $t\in (t_1,t_2)$. Then, by the computations leading to equation (\ref{ecuacionauxiliar}), it follows that $b(t)=w^*(t)w(t)$ is a continuously differentiable solution of  equation (\ref{ecuacionauxiliar}). By the above lemma, this implies that
 $$
 \|w(t)\|_q^q=\tau(|w(t)|^q)=\tau(|b(t)|^{q/2})=\|b(t)\|_{q/2}^{q/2}
 $$
 is constant. Note that in the inequality of Remark \ref{lipschitz},  the Lipschitz constant of the map $a\mapsto |a|^q$, depends on the $q$-norm of the initial condition, and careful inspection of the proof of the theorem of existence and uniqueness (see \cite[page 66]{lang}) shows that $t_1,t_2$ also depend only on it. Therefore, if we denote $w_1=w(t_1/2)$ and pose the Hamilton equation (\ref{euler}) with initial condition $w(t_1/2)=w_1$, this solution is defined on an interval of the same length as the previous solution, because $\|w(0)\|_q=\|w(t_1/2)\|_q$. Iterating this procedure, on both sides of the origin, one obtains a (unique) solution defined for all $t\in\mathbb{R}$.
\end{proof}

\subsubsection{Reversing the Legendre transform}

We would like to obtain solutions, defined for all time, of the original Euler-Lagrange equation. The problem here is that the Legendre transformation
$$
v \mapsto w=v(v^*v)^{n-1}
$$
is one-to-one and continuously differentiable, but not in general a diffeomorphism. Here, in the finite dimensional setting, we could argue using invariance of domain and thus obtaining that $t\mapsto v(t)$ is a continuous function, thus the differential equations $\dot{\gamma}=\gamma v$ will have a unique $C^1$ solution $\gamma$ for any initial data. But we can do better, we can explicitly reverse the transform. In order to do it, the following remark will be useful. Let us denote with $R(v)\subset \mathbb C^N$ the range subspace of $v\in\mathcal A$.

\begin{rem}\label{descompolar}
With notations as in the previous section, let
$$
w=\Omega |w|
$$ be the polar decomposition of $w$, $\Omega$ the unique partial isometry from $R(|w|)$ to $R(w)$ with kernel equal to $R(|w|)^{\perp}=N(|w|)$. Then the polar decomposition of $v$ is
$$
v=\Omega |w|^{\frac{1}{p-1}}=\Omega |w|^{\frac{1}{2n-1}}.
$$
Indeed,
$$
|w|=(w^*w)^{1/2}=((v^*v)^{2n-1})^{1/2}=|v|^{2n-1}=|v|^{p-1}
$$
and clearly $N(v)=N(|v|)=N(|v|^{2n-1})=N(w)$. Analogously,
$$
|w^*|=(ww^*)^{1/2}=(v(v^*v)^{2n-2}v^*)^{1/2}=((vv^*)^{2n-1})^{1/2}=|v^*|^{2n-1}=|v^*|^{p-1},
$$
and thus $R(v)=N(v^*)^\perp=N(|v^*|^{2n-1})^\perp=N(w^*)^\perp=R(w)$. Therefore the claim follows by the uniqueness property of the polar decomposition.
\end{rem}

As above, let $v=\Omega |v|$ be the polar decomposition of the solution  $v$, and $p_0,p_0^{\perp}$ stand for the projections to the kernel and range of $|v|$.

\begin{teo}\label{closedrange}
Each of the curves $|w|, p_0, \Omega (=\Omega p_0^{\perp})$ and $v$ are $C^{\infty}$ maps. In particular, the Euler-Lagrange equation (\ref{variational}) has a unique continuously differentiable solution $v(t)$ for $t\in\mathbb{R}$, with $v(0)=v_0$, which is in fact $C^{\infty}$.
\end{teo}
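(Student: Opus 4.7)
The plan is to bootstrap the $C^1$ solution $w$ of the Hamilton equation up to $C^\infty$ using the constant-spectrum property from the preceding lemma, and then invert the Legendre transformation explicitly via the spectral projections of $w^*w$.

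Start with $w \in C^1(\real,\a)$ furnished by Theorem \ref{existenciatransformed}, and set $b(t) = w^*(t)w(t)$. By the lemma, the eigenvalues of $b$ form a fixed finite set $\lambda_1,\ldots,\lambda_m \ge 0$ with constant multiplicities. Choose pairwise disjoint circles $\Gamma_j \subset \mathbb{C}$, each enclosing only $\lambda_j$; these contours work for every $t$, and the Riesz projections
$$
E_j(t) = \frac{1}{2\pi i} \oint_{\Gamma_j} (z - b(t))^{-1}\, dz
$$
inherit the regularity of $b$, hence of $w$. Writing $|w(t)|^q = \sum_j \lambda_j^{q/2} E_j(t)$ and, using the analogous projections $F_j$ for $ww^*$, $|w^*(t)|^q = \sum_j \lambda_j^{q/2} F_j(t)$, and feeding this back into the Hamilton equation $\dot w = |w|^q - |w^*|^q$, yields a bootstrap: if $w \in C^k$ then the right-hand side is $C^k$, so $w \in C^{k+1}$; iterating, $w$, every $E_j$ and $F_j$, and $|w|$ are $C^\infty$.

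To reconstruct $v$ I appeal to Remark \ref{descompolar}: $v = \Omega |w|^{1/(2n-1)}$, where $\Omega$ is the partial isometry of the polar decomposition of $w$. The projection onto $N(|v|) = N(|w|)$ equals $E_0$ (or $0$ if $0 \notin \sigma(b)$), so $p_0$ is $C^\infty$. For $\Omega$, observing that on the $\lambda_j$-eigenspace of $b$ with $\lambda_j > 0$ one has $w\xi = \sqrt{\lambda_j}\,\Omega\xi$ gives the explicit $t$-smooth formula
$$
\Omega(t) = \sum_{\lambda_j > 0} \lambda_j^{-1/2}\, w(t)\, E_j(t),
$$
while $|w(t)|^{1/(2n-1)} = \sum_{\lambda_j > 0} \lambda_j^{1/(2(2n-1))}\, E_j(t)$ is smooth by the same argument. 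Hence $v = \Omega |w|^{1/(2n-1)}$ is $C^\infty$, and the linear ODE $\dot g = gv$ with initial datum $g(0) = g_0$ has a unique $C^\infty$ solution. Uniqueness of $v$ itself follows because any $C^1$ solution of the Euler-Lagrange equation yields, via the Legendre transformation, a $C^1$ solution of the Hamilton equation with matching initial data, for which Theorem \ref{existenciatransformed} already provides uniqueness.

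The key obstacle is the one flagged in the text: $v \mapsto w$ fails to be a diffeomorphism when $v$ is singular, so smoothness of $v$ cannot be pulled back directly from that of $w$. The lemma's constant-spectrum conclusion removes this obstruction, because it forces the Riesz contours $\Gamma_j$ to be $t$-independent. This in turn makes the projections $E_j$ as smooth as $w$, and permits the explicit block-by-block reconstruction of $v$ above as a finite sum of smooth products, sidestepping any loss of regularity on the stratum where $0 \in \sigma(|v|)$.
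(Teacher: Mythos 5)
Your argument is correct and follows essentially the same route as the paper: the constant spectrum of $b=w^*w$ lets you use Riesz integrals over fixed contours to make $|w|$, $|w|^q$, $|w^*|^q$ as regular as $w$, the Hamilton equation is then bootstrapped to give $w\in C^{\infty}$, and $v$ is recovered via Remark \ref{descompolar}, with uniqueness pulled back from Theorem \ref{existenciatransformed} through the injective Legendre map. Your formula $\Omega=\sum_{\lambda_j>0}\lambda_j^{-1/2}\,w\,E_j$ is just the spectral form of the paper's $\Omega p_0^{\perp}=w\mu$ with $\mu=(|w|+p_0)^{-1}p_0^{\perp}$, so the two proofs coincide in substance.
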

\begin{proof}
Let $b_0=|w_0|^2$, and recall that $\sigma(b(t))$ is constant. Now if $f(\lambda)=\sqrt{\lambda}$ is the principal branch of the complex square root, then
$$
|w(t)|=\frac{1}{2\pi i}\oint_C \sqrt{\lambda} (\lambda- b(t))^{-1}d\lambda
$$
which shows that $t\mapsto |w(t)|$ is $C^1$, since $b(t)=w^*w(t)$, and $w$ is $C^1$ by Theorem \ref{existenciatransformed} (we picked any simple smooth positively oriented path $C$ around the non zero part of the spectrum of $b(t)$). Analogously, if $\lambda^q=\exp(q\log(\lambda))$ denotes the principal branch of the $q$-power, then
$$
|w(t)|^q=\frac{1}{2\pi i}\oint_C \lambda^q (\lambda- |w(t)|)^{-1}d\lambda,
$$
where the curve $C$ now is taken around the non zero part spectrum of $|w(t)|$. This shows that $|w(t)|^q$ is continuously differentiable, and with the same arguments, $|w(t)^*|, |w(t)^*|^q$, and $|w(t)|^{\frac{1}{p-1}}$ are continuously differentiable. Then
$$
\dot{w}=|w|^q-|w^*|^q
$$
is $C^1$, i.e. $w$ is a $C^2$ map. Iterating this argument, it follows that $w$ is a $C^{\infty}$ map, and the same holds for $|w|$ and all its powers. Since
$$
p_0(t)^{\perp}=1-p_0(t)=\frac{1}{2\pi i}\oint_C  (\lambda- b(t))^{-1}d\lambda
$$
is the projection to the range of $|w(t)|$, the map $t\mapsto p_0(t)$ is $C^{\infty}$ and the same applies to $t\mapsto p_0(t)^{\perp}$. Then $|w(t)|+p_0(t)$ is  invertible and $C^{\infty}$. Let
$$
\mu(t)=(|w(t)|+p_0(t))^{-1}p_0(t)^{\perp},
$$
then $\mu$ is a $C^{\infty}$ map such that
$$
|w|\mu=(|w|+p_0-p_0)(|w|+p_0)^{-1}p_0^{\perp}=p_0^{\perp}-0=p_0^{\perp},
$$
hence
$$
\Omega p_0^{\perp}=\Omega|w|\mu=w\mu,
$$
which is $C^{\infty}$ (where $w=\Omega|w|$ is the polar decomposition of $w$ as before). Then
$$
v=\Omega |w|^{\frac{1}{p-1}}=\Omega p_0^{\perp} |w|^{\frac{1}{p-1}}=w\mu |w|^{\frac{1}{p-1}}
$$
is $C^{\infty}$, and as noted before, the solution of equation (\ref{variational}).
\end{proof}

\subsection{Evolution of the initial speed of an extremal curve}

Since the spectra of $|v|$ and $|w|$ are constant and finite, one can describe these solutions by means of the action of the unitary group, on positive matrices and on partial isometries. First we recall some basic facts.

\begin{rem}\label{equivariant}
\begin{enumerate}

\noindent
\item
Let $\{\lambda_1,\dots,\lambda_k\}$ be the spectrum of $|v(0)|\setminus\{0\}$ (if $0$ belongs to the spectrum of $|v(0)|$, we denote it by $\lambda_0$). Put $p_i(t)$ the spectral projection of $|v(t)|$ corresponding to $\lambda_i$. Then there exists a $C^\infty$ curve of unitaries $u(t)$ such that $p_i(t)=u(t)p_i(0)u^*(t)$. There are many ways to construct $u(t)$. For instance,  with the same argument involving Riesz integrals as above, it can be shown that the curves $p_i(t)$ are $C^\infty$. Thus one can define a $C^\infty$ curve of matrices
$$\Lambda_t=-\sum_{j=0}^k p_j\dot{p}_j.
$$
Note that differentiating $p_i^2(t)=p_i(t)$, one obtains $\dot{p}_ip_i+p_i\dot{p}_i=\dot{p}_i$. Also $\dot{p}_i^*=\dot{p}_i$. Combining these and the fact that $1=\sum_{j=0}^k p_i$, one obtains that $\Lambda_t$ is anti-Hermitian:
$$
\Lambda_t^*=-\sum_{j=0}^k \dot{p}_jp_j=-\Lambda.
$$
Consider the linear differential equation in $M_N(\mathbb{C})$:
$$
\left\{ \begin{array}{l} \dot{u}(t)=\Lambda_t u(t) \\ u(0)=1 \end{array}\right. .
$$
Apparently the unique solution is a curve of unitaries. Moreover,
$$
(u^*p_iu)^\cdot=u^*\Lambda p_i+u^*\dot{p}_iu+u^*p_i\Lambda u=u^*\{-\dot{p}_ip_i+\dot{p}_i-p_i\dot{p}_ip_i\}u=0,
$$
by the above identity (here we use that $p_i\Lambda=-p_i\dot{p}_i$ and $\Lambda^* p_i=-\dot{p}_ip_i$). Thus,
$$
u^*(t)p_i(t)u(t)=p_i(0),
$$
and it follows that
$$
|v(t)|=\sum_{j=1}^k\lambda_jp_j(t)=u(t)|v(0)|u^*(t).
$$
\item
Accordingly, there exists a $C^\infty$ curve of unitary matrices $\nu(t)$, such that if $v=\Omega|v|$ is the polar decomposition of the curve $v$, then
$$
\Omega(t)=\nu(t)p_0^\perp(t).
$$
To prove it, denote by $\cal{I}$ the set of partial isometries, and by $\cal{P}$ the set of projections. The unitary group $U_N(\mathbb{C})$, acts on both  sets, by means of the actions:
$$
(u_1,u_2)\cdot \Omega=u_1\Omega u_2^*\ , \ \ u\cdot p=upu^*,
$$
for $\Omega\in\cal{I}$, $p\in\cal{P}$, $u_1,u_2,u\in U_N(\mathbb{C})$. The orbits of these actions are connected components of $\cal{I}$ and $\cal{P}$. In particular, the connected components of $\cal{I}$ and $\cal{P}$ are $C^\infty$ submanifolds of $M_N(\mathbb{C})$. Note that the curve of partial isometries $\Omega(t)$ of $v(t)$ lies in one connected component, as does the curve of projections $p_0^\perp(t)$. Denote from now on  by $\cal{I}$ and $\cal{P}$ precisely these components. Consider the map
$$
U_N(\mathbb{C})\times {\cal P}\to {\cal I} ,\quad (u,p)\mapsto up,
$$
which is clearly $C^\infty$. We claim that it is a submersion. Indeed, in \cite{acm} it was shown that if $v,v_0\in \cal{I}$,  $p_0=v_0^*v_0$ is the initial projection of $v_0$, and $\|v-v_0\|_\infty<1/2$, then there exist unitaries $u_1$ and $u_2$ (which are $C^\infty$-maps in the variables $v$ and $v_0$) such that $v=u_1p_0u_2^*$. Then it is easy to check that $v\mapsto (u_1u_2^*, u_2p_0u_2^*)$ is a $C^\infty$-cross section for the map $(u,p)\mapsto up$, defined in a neighbourhood of $v_0$. Therefore, the $C^\infty$ curve $\Omega(t)$ can be lifted to a  pair of $C^\infty$ curves $(\nu(t),p(t))$ in $U_N(\mathbb{C})\times \cal{P}$ such that $\Omega(t)=\nu(t)p(t)$,  where $p(t)=\Omega^*(t)\Omega(t)$ is the curve of initial projections, i.e. $p(t)=p_0^\perp(t)$ in our current notations.
\item
Putting these facts together, we obtain that the solution $v(t)$ of the Euler-Lagrange equation (\ref{variational}) can be written as
$$
v(t)=\nu(t)u(t)v(0)u^*(t).
$$
where $\nu,u$ are $C^{\infty}$ paths of unitary matrices with $\nu(0)=u(0)=1$. In particular, it follows that not only the spectrum of $|v|$ is constant, but also its multiplicity (i.e. the multiplicity of each eigenvalue).
\end{enumerate}
\end{rem}

\subsection{The length functional and minimal paths}\label{minimizantes}

Let ${\mathcal L}_p$ denote the $p$-length of piecewise $C^1$-paths in $G$,
$$
{\mathcal L}_p(\alpha)=\int_0^1 \|\alpha^{-1}\dot{\alpha}\|_p,
$$
and define the rectifiable distance as the infima of such paths joining given endpoints,
$$
d_p(h,k)=\inf\{ {\mathcal L}_p(\alpha):\alpha(0)=h,\alpha(1)=k\}.
$$
Recall the definition of the rectifiable metric,
$$
\ell_p(\alpha)=\sup_{\pi} \sum_i d_p(\alpha(t_i),\alpha(t_{i+1})),
$$
where the supremum is taken over all partitions $\pi$ of the interval $[0,1]$.

We say that path $\alpha$ is {\it rectifiable}, if it is continuous and $\ell_p(\alpha)<\infty$. We define the rectifiable distance as the infima of rectifiable paths joining given endpoints,
$$
\overline{d}_p(h,k)=\inf\{ \ell_p(\alpha):\alpha(0)=h,\alpha(1)=k\}.
$$

\subsubsection{Rectifiable paths}

By a standard argument (that we omit, see for instance \cite[Chapter 1]{gromov}), if $\alpha$ is a piecewise $C^1$ path, then $\ell_p(\alpha)\le {\mathcal L}_p(\alpha)$, and moreover the metric space $G$ is an inner metric space, that is $\overline{d}_p=d_p$.

Since $\ell_p$ and ${\mathcal L}_p$ are invariant under re-parametrization, in this context we can assume that any rectifiable curve $\alpha:[0,1]\to G$ is Lipschitz continuous, parametrized with constant speed. That is,
$$
d_p(\alpha(t),\alpha(s))\le \ell_p(\alpha)|t-s|
$$
for any $t,s\in [0,1]$, and moreover
\begin{equation}\label{constante}
\ell(\alpha|_{[t,t+s]})=s\ell(\alpha).
\end{equation}

\begin{rem} Since $\alpha$ is Lipschitz,
$$
v_{\alpha}(t)=\lim_{h\to 0} \frac{d_p(\alpha(t+h),\alpha(t))}{|h|}
$$
exists almost everywhere, and moreover, the Lebesgue integral $\int_0^1 v_{\alpha}(t)dt$ exists and equals $\ell_p(\alpha)$. See \cite[Theorem 2.7.6]{burago} for a proof.
\end{rem}

We have
$$
\|\alpha(t)-\alpha(s)\|_p\le d_p(\alpha(t),\alpha(s))\le \ell_p(\alpha)|t-s|
$$
for any $t,s\in [0,1]$. Then (see \cite[Section 8.1]{ambrosio}) the usual derivative
$$
\dot{\alpha}(t)=\lim\limits_{h\to 0}\frac{\alpha(t+h)-\alpha(t)}{h}
$$
exists almost everywhere.

\begin{rem}
We remark here that in the case of compact operators considered in Section \ref{schatten} below, for $1<p<\infty$, the limit can be taken in the norm topology induced by the $p$-norm, since this is a uniformly convex space. See \cite[Proposition III.30]{brezis} for the details.
\end{rem}

From here it follows easily that for any for any $s,t\in [0,1]$,
$$
\alpha(t)-\alpha(s)=\int_s^t \dot{\alpha}(h)dh,
$$
and moreover the Lebesgue integral
$$
{\mathcal L}_p(\alpha)=\int_0^1 \|\alpha^{-1}\dot{\alpha}\|_p
$$
is well-defined, and the same holds true for the energy functional. Note also that, due to Hölder's inequality, for any rectifiable path one has
\begin{equation}\label{holder}
{\mathcal L}_p(\alpha)^p=(\int_0^1\|\alpha^{-1}(t)\dot{\alpha}(t)\|_p dt)^p\le \int_0^1\|\alpha^{-1}(t)\dot{\alpha}(t)\|_p^p dt={\cal E}_p(\alpha).
\end{equation}

\begin{rem}\label{esconstante}
Assume that $\alpha$ is a rectifiable and minimizing curve for the length functional (since $\overline{d}_p=d_p$, there is no ambiguity here). We assume that $\alpha$ is parametrized with constant speed. Then it is easy to check that $\alpha$ is also minimizing for both functionals, on any subinterval $[s,s+h]\subset [0,1]$. Thus by equation (\ref{constante}),
$$
\frac1s\int_s^{s+h} \|\dot{\alpha}(t)\|_{\alpha(s)}=\frac1s d_p(\alpha(s),\alpha(s+h))=\frac1s s\ell_p(\alpha),
$$
where again $\int$ denotes the Lebesgue integral. It follows that $\|\alpha^{-1}\dot{\alpha}(t)\|_p=constant=\ell_p(\alpha)$ for any $t\in [0,1]$ where the derivative exists.
\end{rem}

\subsubsection{Length and energy}

When considering the relation between minima and critical points of the length and energy functionals,
$$
{\mathcal L}_p(\gamma)=\int_0^1\|\gamma^{-1}(t)\dot{\gamma}(t)\|_p dt \ , \  \ {\cal E}_p(\gamma)=\int_0^1\|\gamma^{-1}(t)\dot{\gamma}(t)\|^p_p dt,
$$
it is important to note that the length functional is invariant under re-parametrizations, while the energy functional is not.

\begin{prop}\label{escritica}
Let $\gamma(t)$ be a curve in $G$, which is a Lipschitz map, $t\in[0,1]$.
\begin{enumerate}
\item
If $\gamma$ is a minimum of the length functional ${\mathcal L}_p$, then its re-parametrization by arc-length is a minimum of ${\cal E}_p$.
\item
If $\gamma$ is a critical point of  ${\cal E}_p$, then it is a critical point of ${\mathcal L}_p$.
\end{enumerate}
\end{prop}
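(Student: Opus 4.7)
The two assertions are of quite different flavor; both rest on the Hölder inequality (\ref{holder}), but (2) additionally uses the information about critical points provided by the Euler-Lagrange analysis.

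\emph{Part (1).} The key observation is that length is invariant under orientation-preserving reparametrization while energy is not, and that equality in Hölder's inequality (\ref{holder}) holds precisely when $t\mapsto\|\gamma^{-1}(t)\dot{\gamma}(t)\|_p$ is constant, i.e. for arc-length parametrization. I would thus argue as follows. Let $\gamma$ be a minimizer of ${\mathcal L}_p$ joining $g_1,g_2\in G$, and let $\tilde{\gamma}$ denote its reparametrization by constant speed $c:={\mathcal L}_p(\gamma)$. Since length is reparametrization invariant, ${\mathcal L}_p(\tilde\gamma)={\mathcal L}_p(\gamma)$, and since $\|\tilde\gamma^{-1}\dot{\tilde\gamma}\|_p\equiv c$ almost everywhere, one has $\mathcal E_p(\tilde\gamma)=c^{p}={\mathcal L}_p(\gamma)^p$. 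Now let $\beta$ be any Lipschitz path joining $g_1$ to $g_2$; by the minimality of $\gamma$ and (\ref{holder}),
$$
\mathcal E_p(\tilde\gamma)={\mathcal L}_p(\gamma)^p\le {\mathcal L}_p(\beta)^p\le \mathcal E_p(\beta),
$$
so $\tilde\gamma$ minimizes the energy.

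\emph{Part (2).} The heart of the matter is that a critical point of $\mathcal E_p$ automatically has constant $p$-speed. Indeed, a smooth critical point $\gamma$ of $\mathcal E_p$ satisfies the Euler-Lagrange equation (\ref{variational}), so by the computations preceding Theorem~\ref{existenciatransformed} together with the Lemma on the auxiliary equation, the spectrum of $|v(t)|$ (hence $\tau|v(t)|^p$) is independent of $t$; that is, $\|v(t)\|_p\equiv c$ for some constant $c\ge0$. (If $c=0$ the curve is constant and trivially a critical point of ${\mathcal L}_p$; assume $c>0$.)

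Now let $\gamma_s$ be a smooth variation of $\gamma=\gamma_0$ with fixed endpoints, write $v(s,t)=\gamma_s^{-1}\dot\gamma_s$, and let $\phi(s,t)=\|v(s,t)\|_p=(\tau(v^*v)^n)^{1/p}$. Since $p$-th power is smooth and differentiation under the integral is justified (the integrand is a smooth function on a compact parameter rectangle), the chain rule yields
$$
\frac{\partial}{\partial s}\phi(s,t)=\frac{1}{p}\,\phi(s,t)^{1-p}\,\frac{\partial}{\partial s}\tau\bigl(v(s,t)^*v(s,t)\bigr)^{n}.
$$
Evaluating at $s=0$ and using $\phi(0,t)\equiv c$, we can pull the constant $\tfrac1{pc^{p-1}}$ out of the integral over $t$, obtaining
$$
\left.\frac{d}{ds}{\mathcal L}_p(\gamma_s)\right|_{s=0}
=\frac{1}{pc^{p-1}}\int_0^1\frac{\partial}{\partial s}\tau(v^*v)^n\bigg|_{s=0}dt
=\frac{1}{pc^{p-1}}\left.\frac{d}{ds}\mathcal E_p(\gamma_s)\right|_{s=0}=0,
$$
so $\gamma$ is critical for ${\mathcal L}_p$ as claimed.

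\textbf{Expected obstacle.} Part (1) is essentially bookkeeping with Hölder. The delicate point is in Part (2): one must be sure that the constancy of $\|v(0,t)\|_p$ can indeed be invoked, since it is only the unperturbed curve that satisfies the Euler-Lagrange equation; this is exactly what allows the factor $\phi(0,t)^{1-p}$ to come outside the integral. Without constant speed, the two first variations differ by a weight and the implication would fail, which is why critical points of length that are not traversed with constant speed need not be critical for energy.
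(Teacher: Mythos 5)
Your part (1) is essentially the paper's argument: reparametrize to constant speed, observe that then ${\mathcal E}_p(\tilde\gamma)={\mathcal L}_p(\gamma)^p$, and conclude with H\"older's inequality (\ref{holder}); this is fine. Your part (2) also follows the paper's strategy — constant $p$-speed of the critical curve makes the first variation of ${\mathcal L}_p$ a constant multiple of the first variation of ${\mathcal E}_p$ — but there is a genuine gap at the point where you obtain that constancy. You write ``a smooth critical point $\gamma$ of ${\mathcal E}_p$ satisfies the Euler--Lagrange equation (\ref{variational})'', whereas the proposition is stated for Lipschitz curves. For a merely Lipschitz critical point, the first-variation computation of Section 2.1.1 (which is carried out for smooth curves and integrates by parts the quantity $v(v^*v)^{n-1}$, presupposing its differentiability) does not apply as such. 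The paper fills this in by invoking the regularity theory for Lipschitz extremals (Giaquinta--Hildebrandt): the momentum $t\mapsto \nu(\nu^*\nu)^{n-1}$ is absolutely continuous and the Euler--Lagrange equation holds almost everywhere. Moreover, the constant-spectrum Lemma you cite is stated for continuously differentiable solutions of (\ref{ecuacionauxiliar}); what is needed here is its extension to Lipschitz, a.e.\ solutions, which is precisely Remark \ref{extensiondelremark}. Without these two ingredients, your claim that $\|v(t)\|_p\equiv c$ — the crux that lets you pull $\phi(0,t)^{1-p}$ outside the $t$-integral — is not justified in the stated generality; with them, your computation coincides with the paper's.

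A minor further point: in part (1) you assert that arc-length reparametrization alone gives $\|\tilde\gamma^{-1}\dot{\tilde\gamma}\|_p\equiv c$ a.e.; this requires identifying the metric derivative of $d_p$ along the curve with $\|\gamma^{-1}\dot\gamma\|_p$, which the paper handles (for minimizers) in Remark \ref{esconstante}. Your observation that $\phi(s,t)>0$ for $s$ near $0$ when $c>0$, so that differentiating $\|\nu_s\|_p$ at $s=0$ is legitimate, is a fair point that the paper passes over silently.
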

\begin{proof}
Let $\gamma$ be a minimum of ${\mathcal L}_p$, the same holds true for its re-parametrization by arc-length (which we still call $\gamma$). By Remark \ref{esconstante},  $\|\gamma^{-1}\dot{\gamma}\|_p=c\; \; \; a.e.$, thus
$$
{\mathcal L}_p(\gamma)^p=\left(\int_0^1\|\gamma^{-1}(t)\dot{\gamma}(t)\|_p dt\right)^p=c^p={\cal E}_p(\gamma).
$$
If $\alpha$ is any other Lipschitz curve in $G$, by H\"older's inequality (\ref{holder}),
$$
{\cal E}_p(\gamma)={\mathcal L}_p(\gamma)^p\le {\mathcal L}_p(\alpha)^p\le {\cal E}_p(\alpha),
$$
which proves the first claim.

Suppose now that $\gamma$ is a critical point of ${\cal E}_p$. Consider the energy functional with its Lagrangian
$$
{\mathcal E}(\alpha)=\int_0^1 E(\alpha,\dot{\alpha}) dt,
$$
where $E(u,z)=\|u^{-1}z\|_p^p=Tr[(u^{-1}z)^*u^{-1}z]^n$, $E:G\times {\mathcal A}\to \mathbb R$ is a $C^1$ map.
 Then $\gamma$ is a weak-Lipschitz solution of the Euler-Lagrange equation, with the same proof as in \cite{giaquinta}, Remark 2 in page 40 and Proposition 2 in page 41, loc. cit. That is, $\frac{d}{dz}E(\gamma(t),\dot{\gamma}(t))$ is absolutely continuous and verifies the Euler-Lagrange equation
$$
\frac{d}{dt}\frac{d}{dz}E(\gamma(t),\dot{\gamma}(t))=\frac{d}{du}E(\gamma(t),\dot{\gamma}(t))
$$
almost everywhere on $[0,1]$.   Calling ${\nu}={\gamma}^{-1}\dot{\gamma}$, in our particular situation, we know that the Euler-Lagrange equation reduces to
$$
\frac{d}{dt}\nu(\nu^*\nu)^{n-1}=(\nu^*\nu)^n-(\nu\nu^*)^n.
$$
Then $\beta=\nu^*\nu$ is a Lipschitz map and an almost everywhere solution of the equation (\ref{ecuacionauxiliar}). By Remark \ref{extensiondelremark}, this implies that $\beta(t)$ has constant spectrum. More precisely,
$$
\|\nu(t)\|_p=\||\nu(t)|\|_p=\|\beta(t)^{1/2}\|_p
$$
is constant. Let $\gamma_s(t)$ be a variation of $\gamma_0=\gamma$ (i.e. for each  $s\in (-r,r)$, $\gamma_s$ is a Lipschitz map with values in $G$, and it is differentiable with respect to the parameter $s$). Put $\nu_s=\gamma_s^{-1}\dot{\gamma}_s$. Then
$$
\frac{d}{d s}|_{s=0} {\mathcal L}_p(\gamma_s)=\int_0^1 \frac{d}{d s}|_{s=0}\|\nu_s\|_p d t.
$$
Note that
$$
p\frac{d}{d s}\|\nu_s\|_p=p\frac{d}{d s}(\|\nu_s\|^p_p)^{1/p}=\|\nu_s\|^{1/p-1}\frac{d}{d s}\|\nu_s\|^p_p.
$$
At $s=0$, $\|\nu_0(t)\|_p=\|\nu\|_p=c$ is constant. Thus
$$
p\frac{d}{d s}\Big|_{s=0} {\mathcal L}_p(\gamma_s)=c^{1/p-1}\int_0^1 \frac{d}{d s}\Big|_{s=0}\|\nu_s\|^p_p=c^{1/p-1}\frac{d}{d s}\Big|_{s=0}{\cal E}_p(\gamma_s)=0.
$$
\end{proof}

\begin{rem}\label{esabso}
Let us emphasize, in the above proof, the fact that the partial derivative
$$
\frac{d}{dz}E(\gamma(t),\dot{\gamma}(t))
$$
is absolutely continuous, when $\gamma$ is a rectifiable minimizer. In our context this means that
$$
[0,1]\ni t\mapsto \nu(\nu^*\nu)^{n-1}=\omega
$$
is absolutely continuous. Then, $\omega^*\omega$ is also continuous, and it has constant spectrum as we mentioned earlier, and the same applies to $|\omega|$, since
$$
\dot{w}=|w|^q-|w^*|^q.
$$
It should be noted that the continuity of $|\omega|$ implies that $\omega$ is in fact a $C^1$ map.
\end{rem}

Combining the last remark with Theorem \ref{closedrange}, we obtain the following characterization of minimal rectifiable arcs in $G$.

\begin{coro}\label{suaave}
If $\gamma$ is a rectifiable and minimizing curve for the $p$-distance, then $\gamma$ is $C^{\infty}$ in ${\mathcal A}$, and the unique solution of the Euler-Lagrange equation, for given initial conditions.
\end{coro}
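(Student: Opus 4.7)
The plan is to chain Proposition \ref{escritica}, Remark \ref{esabso} and Theorem \ref{closedrange}. Since $\gamma$ is a rectifiable minimizer for $d_p$, I would first re-parametrize by arc-length; by Remark \ref{esconstante} the original $\gamma$ already has constant speed, so this is merely a linear rescaling, and by Proposition \ref{escritica} (item 1) the reparametrized curve is a minimum, hence a critical point, of the $p$-energy ${\cal E}_p$. The argument in the proof of Proposition \ref{escritica} then yields that $\gamma$ is a weak-Lipschitz solution of the Euler-Lagrange equation. Writing $\nu=\gamma^{-1}\dot\gamma$ and $\omega=\nu(\nu^*\nu)^{n-1}$, Remark \ref{esabso} gives that $\omega$ is absolutely continuous, that $\omega^*\omega$ has constant spectrum, and consequently that $|\omega|$ is continuous and $\omega$ is in fact a $C^1$ map satisfying the Hamilton equation $\dot\omega=|\omega|^q-|\omega^*|^q$ pointwise on $[0,1]$.

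Next, I would bootstrap the regularity of $\omega$ exactly as in the proof of Theorem \ref{closedrange}: constancy of the spectrum of $\omega^*\omega$ allows one to write $|\omega|$, $|\omega|^q$, $|\omega^*|$, $|\omega^*|^q$ and $|\omega|^{1/(p-1)}$ as Riesz contour integrals around a fixed curve enclosing the nonzero spectrum, so if $\omega\in C^k$ then so are these functions of $\omega$, and the Hamilton equation forces $\omega\in C^{k+1}$; iterating, $\omega\in C^\infty$. The explicit reversal of the Legendre transform recorded in the proof of Theorem \ref{closedrange}, namely $\nu=\omega\,\mu\,|\omega|^{1/(p-1)}$ with $\mu=(|\omega|+p_0)^{-1}p_0^\perp$, then exhibits $\nu$ as a $C^\infty$ curve in ${\cal A}$. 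Since $\gamma$ is Lipschitz in $G$ and satisfies $\dot\gamma=\gamma\nu$ with $\nu\in C^\infty$, integration gives $\gamma\in C^\infty$.

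For uniqueness, given the initial data $\gamma(0)\in G$ and $v_0=\nu(0)\in{\cal A}$, Theorem \ref{closedrange} supplies a unique $C^\infty$ solution of the Euler-Lagrange equation with those initial conditions, and by the preceding paragraph our (now smooth) curve $\gamma$ is such a solution; hence the two coincide.

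The main obstacle in this argument is the passage from the weak (Lipschitz, almost-everywhere) solution produced by the variational calculus to the $C^1$ pointwise Hamilton equation required to launch the bootstrap. That step, carried out in Remark \ref{esabso}, crucially exploits the fact that constancy of the spectrum of $\omega^*\omega$ upgrades the continuity of $\omega^*\omega$ to continuity of $|\omega|$, and hence via the Hamilton equation to continuity of $\dot\omega$. Once $\omega\in C^1$ is available, the Riesz-calculus bootstrap and the explicit inversion of the Legendre transform from Theorem \ref{closedrange} both run automatically, and the remainder of the argument is essentially bookkeeping.
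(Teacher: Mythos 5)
Your proposal is correct and follows essentially the same route as the paper, whose proof is precisely the combination of Proposition \ref{escritica}, Remark \ref{esabso} and Theorem \ref{closedrange} that you spell out (minimizer $\Rightarrow$ constant-speed energy minimizer $\Rightarrow$ weak-Lipschitz Euler--Lagrange solution $\Rightarrow$ $\omega\in C^1$ via constancy of the spectrum $\Rightarrow$ Riesz-integral bootstrap and Legendre inversion $\Rightarrow$ $C^\infty$ and uniqueness). The only cosmetic difference is that you re-run the bootstrap of Theorem \ref{closedrange} directly on $\omega$ instead of identifying it with the global smooth solution of the Hamilton equation by uniqueness, which amounts to the same machinery.
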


\subsection{The Riemannian case}

Choosing $p=2$, we find ourselves in the realm of Riemannian geometry. Left-invariant metrics on Lie groups have been extensively discussed; for instance we refer the reader to the beautiful appendix in the book by V. I. Arnol'd on classical mechanics \cite[Appendix 2]{arnold}. Lie groups considered by Arnol'd  are real, but since we are working with the real part of the trace, his observations can be applied to our context with some caution. It is shown there that these equations of motion correspond to the case of a (generalized) rigid body. In particular, the case of one-parameter groups (which, as we noted, correspond to normal initial speeds) are the rotations of the rigid body around its major axes of the moment of inertia ellipsoid. Some of the terms and remarks in this section are related to Arnol'd's exposition.

Note that the Euler-Lagrange equation  (\ref{variational}) of the variational problem becomes
$$
\dot{v}=v^*v-vv^*.
$$
The solutions in this case are locally minimizing among piecewise smooth curves joining the same endpoints, by the standard argument of Riemannian geometry.

What is remarkable here (and to the best to our knowledge seems to be new), is that the solutions can be computed explicitly for any initial position $g_0\in G$ and any initial speed $g_0v_0\in \mathcal A$.

\begin{teo}\label{gilberto}
Let $g_0\in G$, $v_0\in \mathcal A$. Then the unique geodesic of the Levi-Civita connection induced by the trace inner-product metric on the invertible group $G$ of $\mathcal A$, with initial position $g_0$ and initial speed $g_0v_0$, is given by
$$
g(t)=g_0e^{tv_0^*}e^{t(v_0-v_0^*)}.
$$
\end{teo}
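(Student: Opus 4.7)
The plan is to directly verify that the proposed formula satisfies the Euler-Lagrange equation with the prescribed initial data, and then invoke the uniqueness already established in Theorem \ref{closedrange}. No optimization-style argument is needed beyond that, since for $p=2$ the Euler-Lagrange equation is precisely the geodesic equation of the Levi-Civita connection of the left-invariant Riemannian metric, and solutions are unique.

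Concretely, for $p=2$ (so $n=1$), equation (\ref{variational}) reduces to $\dot v = v^*v - vv^*$, since $v(v^*v)^{n-1}=v$ and $(v^*v)^n-(vv^*)^n=v^*v-vv^*$. I would first decompose $v_0 = x_0+iy_0$ into its self-adjoint (real) and anti-self-adjoint (imaginary) parts, so that $v_0-v_0^* = 2iy_0$, and hence $e^{t(v_0-v_0^*)} = e^{2ity_0}$ is a one-parameter group of unitaries. Writing $g(t)=g_0\,e^{tv_0^*}e^{t(v_0-v_0^*)}$, the fact that $v_0^*$ commutes with $e^{tv_0^*}$ and $v_0-v_0^*$ commutes with $e^{t(v_0-v_0^*)}$ gives
$$
\dot g(t) = g_0\,e^{tv_0^*}\bigl(v_0^* + (v_0-v_0^*)\bigr)e^{t(v_0-v_0^*)} = g_0\,e^{tv_0^*}v_0\,e^{t(v_0-v_0^*)},
$$
from which $g(0)=g_0$ and $\dot g(0)=g_0v_0$ are immediate, and, left-translating,
$$
v(t) := g^{-1}(t)\dot g(t) = e^{-2ity_0}v_0\,e^{2ity_0} = \tilde{x}(t) + iy_0,
$$
where $\tilde{x}(t) := e^{-2ity_0}x_0e^{2ity_0}$ is self-adjoint for every $t$.

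Next I would check the ODE: since $y_0$ commutes with $e^{2ity_0}$, direct differentiation gives $\dot v(t) = 2i\,e^{-2ity_0}[x_0,y_0]\,e^{2ity_0} = 2i[\tilde{x}(t),y_0]$. On the other hand, expanding $v^*v - vv^*$ with $v=\tilde x+iy_0$, $v^*=\tilde x-iy_0$, the $\tilde x^2$ and $y_0^2$ terms cancel and one is left with $v^*v-vv^* = 2i[\tilde x,y_0]$, exactly matching $\dot v$. Therefore $v(t)$ solves $\dot v = v^*v-vv^*$ with $v(0)=v_0$.

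Finally, since $g$ satisfies $\dot g = g v$ with $g(0)=g_0$, and $v$ solves the Euler-Lagrange equation with the correct initial speed, uniqueness of the solution $v$ (Theorem \ref{closedrange}, specialized to $n=1$) together with the uniqueness of the linear ODE $\dot g = gv$ forces $g$ to be the unique geodesic with the prescribed initial data. There is essentially no obstacle here: the only delicate point is keeping track of which factors commute so that the differentiation of the product of exponentials produces the clean expression $e^{tv_0^*}v_0\,e^{t(v_0-v_0^*)}$; everything else is routine algebra once $v_0$ is split into its self-adjoint and anti-self-adjoint parts.
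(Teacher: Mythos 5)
Your verification is correct, and the logic (check initial data, check that $v(t)=e^{-2ity_0}v_0e^{2ity_0}$ solves $\dot v=v^*v-vv^*$, then invoke uniqueness of the Euler--Lagrange flow and of $\dot g=gv$, together with the standard fact that for $p=2$ critical points of the energy are exactly the Levi-Civita geodesics) is sound; for $n=1$ the equation $\dot v=v^*v-vv^*$ is a polynomial ODE, so the uniqueness you need is immediate. Your route differs from the paper's in where the verification is carried out: the paper's proof works one level down, through the auxiliary equation $\dot b=2[b,k]$ for $b=|v|^2$ (using that for $p=2$ the Legendre transform is the identity and the skew-adjoint part $ik$ of $v$ is constant), solves that equation explicitly as the conjugation orbit $b(t)=e^{-2itk}b_0e^{2itk}$ via the commuting left/right multiplication operators $L_{2k},R_{2k}$, and then observes that the proposed curve produces the same $b$; your proof instead checks the Euler--Lagrange equation for $v$ itself by the commutator computation $\dot v=2i[\tilde x,y_0]=v^*v-vv^*$, which is essentially the computation the paper already performs in the partial-isometry subsection (for general $p$), specialized to $p=2$ where no hypothesis on $v_0$ is needed. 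What your version buys is a self-contained and arguably more complete argument -- the paper's verification of the $b$-equation alone leaves the reader to supply the step back from $b$ to $v$ via constancy of the skew part and uniqueness -- while the paper's version buys a conceptual link to the Hamilton/auxiliary-equation machinery (conjugation-invariance of $|v|$) that it reuses elsewhere. Either way the conclusion and the role of uniqueness are the same, so your proof is acceptable as written.
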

\begin{proof}
Recall the auxiliary equation (\ref{ecuacionauxiliar}) of the positive part $b=|w|^2$ given by
$$
\dot{b}=b2k-2kb,
$$
and note that here $v=w$, that is, the Legendre transformation is the identity map. Recall also that the skew-adjoint part of $v=v(t)$ is constant, hence $v(t)=h(t)+ik$ with $h(t)$ smooth and self-adjoint. Note that $2bk-2kb=(R_{2k}-L_{2k})(b)$, where $R$ and $L$ denote right and left multiplication respectively; this yields
$$
b(t)=e^{2tR_k}e^{2tL_{-k}}b_0= e^{-2itk}b_0e^{2itk}.
$$
On the other hand, the proposed solution $g(t)=g_0e^{t(h-ik)}e^{2itk}$ readily verifies the same equation.
\end{proof}

In this context, the Riemannian exponential map is given, for fixed $g\in GL(N)$, by the expression
$$
Exp(v)=ge^{v^*}e^{v-v^*},
$$
and the exponential flow is certainly a smooth ($C^{\infty}$) map from $\mathbb R\times \mathcal A$ to $G$.

\subsubsection{Metric connection, parallel transport and curvature}

Note that, by the cyclic properties of the trace, the Riemannian metric in this group is given by
\begin{equation}\label{metrii}
\langle x,y\rangle_g=\tau( g^{-1}x(g^{-1}y)^*)=\tau ( (gg^*)^{-1} xy^* )=\langle (gg^*)^{-1}x,y\rangle_1.
\end{equation}
Then, the explicit map $g\mapsto (gg^*)^{-1}$, which is called the {\it angular momentum operator} \cite{arnold}, enables a straightforward computation of the Levi-Civita connection $\nabla$ on $GL(N)$. Let $X,Y$ be smooth vector fields considered as maps $X,Y:GL(N)\to \mathcal A$, and denote $X_g=g^{-1}X(g)$, the translation of the field $X$ to the identity, likewise for $Y$. Then
$$
\nabla_X Y(g)=DY_g(X_g)-\frac12 g \{X_gY_g+Y_gX_g+X_g^*Y_g+Y_g^*X_g-X_gY_g^*-Y_gX_g^* \}.
$$
Indeed, it is easy to check that if $f:GL(N)\to\mathbb R$ is a smooth function, then
$$
\nabla_{fX}Y=f\nabla_XY,\mbox{ and }\nabla_X(fY)=X(f) Y+f\nabla_X Y.
$$
Moreover, it is also easy to check that $\nabla$ has no torsion. What is left, is to check the compatibility of $\nabla$ with the metric
$$
X\langle Y,Z\rangle_g=\langle \nabla_XY,Z\rangle_g+\langle Y,\nabla_XZ\rangle_g.
$$
But this is also straightforward, if we use equation (\ref{metrii}), the ciclicity of the trace, and the fact that $\tau(a^*)=a$ for any $a\in\mathcal A$.

Now, consider $v,w\in\mathcal A$, $V,W$ the left-invariant vector fields given by $V_g=gv$, $W_g=gw$, and their adjoints defined as $V^*_g=gv^*$, $W^*_g=gw^*$, and note that $[V,W]_g=g[v,w]$, where now $[v,w]$ denotes the usual commutator of matrices. Then, one obtains the following simple expression for the Levi-Civita connection:
$$
\nabla_VW(g)=\frac12 g\{ [v,w]+[v,w^*]+[w,v^*]\}.
$$
Proper formulas for the sectional curvature can be obtained from here, or can be adapted from Arnol'd's book \cite[Appendix 2]{arnold}.

\subsection{Non Riemannian case}

Consider the metric space $(G,d_p)$, where $d_p$ is the rectifiable distance induced by the left invariant metric. First, we establish the following fact:

\begin{lem}\label{completo}
The space $(G,d_p)$ is a complete metric space.
\end{lem}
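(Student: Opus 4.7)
The plan is to show $(G,d_p)$ is complete by transferring completeness from $(M_N(\mathbb{C}),\|\cdot\|_\infty)$. Since $\tau$ is the normalized trace, on $M_N(\mathbb{C})$ one has $\|x\|_p\le\|x\|_\infty\le N^{1/p}\|x\|_p$; in particular all norms are equivalent and $G$ is open in $M_N(\mathbb{C})$, though of course $G$ is not complete in the uniform norm. The key technical point is to control simultaneously the uniform norms of $\alpha(t)$ and $\alpha(t)^{-1}$ along piecewise $C^1$ paths in $G$.

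For any piecewise $C^1$ path $\alpha:[0,1]\to G$ with $\alpha(0)=g$, $\alpha(1)=h$, set $v(t)=\alpha(t)^{-1}\dot\alpha(t)$. The identities $\dot\alpha=\alpha v$ and $\frac{d}{dt}\alpha^{-1}=-v\alpha^{-1}$, combined with $\|\cdot\|_\infty\le N^{1/p}\|\cdot\|_p$, yield the pointwise bounds
$$
\|\dot\alpha(t)\|_\infty\le N^{1/p}\|\alpha(t)\|_\infty\,\|v(t)\|_p,\qquad \left\|\frac{d}{dt}\alpha(t)^{-1}\right\|_\infty\le N^{1/p}\|\alpha(t)^{-1}\|_\infty\,\|v(t)\|_p.
$$
Gronwall applied to each, followed by integration from $0$ to $1$, gives
$$
\|h-g\|_\infty\le\|g\|_\infty\bigl(e^{N^{1/p}{\mathcal L}_p(\alpha)}-1\bigr),\qquad \|h^{-1}-g^{-1}\|_\infty\le\|g^{-1}\|_\infty\bigl(e^{N^{1/p}{\mathcal L}_p(\alpha)}-1\bigr).
$$
Taking the infimum over paths replaces ${\mathcal L}_p(\alpha)$ by $d_p(g,h)$, so both $g\mapsto g$ and $g\mapsto g^{-1}$ are locally Lipschitz from $(G,d_p)$ into $(M_N(\mathbb{C}),\|\cdot\|_\infty)$.

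Now let $(g_n)\subset G$ be $d_p$-Cauchy. Since $d_p(g_1,g_n)$ is bounded, the two estimates produce uniform bounds $\|g_n\|_\infty\le C_1$ and $\|g_n^{-1}\|_\infty\le C_2$. Reapplying the estimates with $g=g_n$, $h=g_m$ (and the uniform constants in place of $\|g\|_\infty$, $\|g^{-1}\|_\infty$) shows that $(g_n)$ and $(g_n^{-1})$ are both Cauchy in $\|\cdot\|_\infty$, hence converge to some $A,B\in M_N(\mathbb{C})$. Passing to the limit in $g_ng_n^{-1}=1$ yields $AB=1$, so $A\in G$ with $A^{-1}=B$.

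To finish I would show $d_p(g_n,A)\to 0$ by constructing explicit short segments. For $n$ large enough that $\|g_n-A\|_\infty<(2\|A^{-1}\|_\infty)^{-1}$, the straight-line segment $\alpha(t)=A+t(g_n-A)$ lies in $G$ and satisfies $\|\alpha(t)^{-1}\|_\infty\le 2\|A^{-1}\|_\infty$ by a Neumann series, whence
$$
d_p(A,g_n)\le{\mathcal L}_p(\alpha)=\int_0^1\|\alpha(t)^{-1}(g_n-A)\|_p\,dt\le 2\|A^{-1}\|_\infty\,\|g_n-A\|_p\longrightarrow 0.
$$
The main technical obstacle is the Gronwall estimate for $\alpha^{-1}$: one must bound $\|\frac{d}{dt}\alpha^{-1}\|_\infty$ linearly in $\|\alpha^{-1}\|_\infty$ in order to close the Gronwall loop, and the clean factorization $\frac{d}{dt}\alpha^{-1}=-v\alpha^{-1}$ (instead of the naive $-\alpha^{-1}\dot\alpha\alpha^{-1}$, which would a priori produce an unwanted $\|\alpha^{-1}\|_\infty^2$ factor) is what makes the linear bound available.
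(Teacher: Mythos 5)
Your proof is correct, but it follows a genuinely different route from the paper's. The paper disposes of the lemma in three lines: for $p=2$ it invokes Hopf--Rinow, using the explicit geodesics of Theorem \ref{gilberto} (defined for all $t\in\mathbb{R}$) to get geodesic completeness and hence metric completeness of $(G,d_2)$, and then observes that $d_p$ and $d_2$ are equivalent metrics because the $p$- and $2$-norms on each tangent space are equivalent with dimension-dependent constants, so $(G,d_p)$ is complete for every $p\ge 2$. You instead prove completeness directly: the Gronwall estimates $\|h-g\|_\infty\le\|g\|_\infty\bigl(e^{N^{1/p}d_p(g,h)}-1\bigr)$ and its analogue for inverses (correctly exploiting that $\tfrac{d}{dt}\alpha^{-1}=-v\,\alpha^{-1}$, so the bound is linear in $\|\alpha^{-1}\|_\infty$ and the Gronwall loop closes) show that a $d_p$-Cauchy sequence, together with its inverses, is uniformly Cauchy; the limit is then invertible, and the straight-segment estimate with the Neumann-series bound gives $d_p(g_n,A)\to 0$. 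Both arguments are sound. The paper's proof is shorter but leans on Theorem \ref{gilberto} and on Riemannian machinery, and is intrinsically finite-dimensional twice over (Hopf--Rinow and the equivalence of norms). Your proof is self-contained, independent of the explicit $p=2$ geodesics, and in addition yields the useful by-products that $g\mapsto g$ and $g\mapsto g^{-1}$ are locally Lipschitz from $(G,d_p)$ to the uniform norm and that uniform convergence with bounded inverses implies $d_p$-convergence; moreover, replacing $\|\cdot\|_\infty\le N^{1/p}\|\cdot\|_p$ by the inequality $\|\cdot\|_\infty\le\|\cdot\|_p$ valid for the non-normalized Schatten norms, essentially the same argument transfers to the Banach--Lie groups $G_p(\mathcal{H})$ of Section \ref{schatten}, where the paper's reduction to $p=2$ plus Hopf--Rinow is unavailable. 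The only cosmetic caveat is your closing remark: since $\alpha^{-1}\dot{\alpha}=v$, the ``naive'' expression $-\alpha^{-1}\dot{\alpha}\alpha^{-1}$ is identical to $-v\alpha^{-1}$, so the point is merely about which factor one chooses to estimate, not about two genuinely different formulas.
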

\begin{proof}
First, note that when $p=2$, by Hopf-Rinow's theorem $(G,d_2)$ is complete since the manifold $G$ is geodesically complete with the $2$-metric (Theorem \ref{gilberto}). Now, we claim that $d_p$ is equivalent to $d_2$ for any $p\ge 2$, a fact that will prove the claim of the lemma. Indeed, at each tangent space of $G$ (which identifies with $\a$), the $p$-norm is equivalent to the $2$-norm with constants which depend only on the dimension of $\a$. Examining the length functionals, it follows that the metrics are equivalent, with the same constants.
\end{proof}

Since $\mathcal A$ is finite dimensional, $(G,d_p)$ is also locally compact. Thus, by Cohn-Vossen's theorem (see \cite[Theorem 2.5.28]{burago}), given $h,k\in G$, there exists a short (continuous and rectifiable) path $\gamma$ joining $h,k$.  By changing the parameter, we may assume that $\gamma$ is parametrized with constant length, thus it is a Lipschitz map. We summarize our findings in the following theorem.

\begin{teo}\label{minimalidad dimension finita}
For each pair of elements $g_0,g_1\in G$, there exists a curve $\gamma$ in $G$,  such that $\gamma(0)=g_0$ and $\gamma(1)=g_1$ which has minimal length for the $p$-norm ($p=2n$). This curve $\gamma$ is of class $C^{\infty}$, and the unique minimizer in the class of continuous rectifiable paths joining given endpoints. Moreover, $v=\gamma^{-1}\dot{\gamma}$ is the unique solution of the Euler-Lagrange equation with the given initial conditions.
\end{teo}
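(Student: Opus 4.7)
The plan is to combine the metric-geometric existence theorem of Cohn-Vossen with the regularity and uniqueness results already established for solutions of the Euler-Lagrange equation. First, I will obtain existence of a minimal curve from completeness and local compactness of $(G,d_p)$. Then I will upgrade regularity and identify the curve as an extremal via Corollary \ref{suaave}. Finally, uniqueness of the Euler-Lagrange solution with given initial data comes from Theorem \ref{closedrange}, while the stronger uniqueness of the minimizer itself will be the main obstacle.

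For existence, Lemma \ref{completo} asserts that $(G,d_p)$ is complete, and since $G$ is open in the finite-dimensional space $\mathcal{A}$, it is also locally compact. The opening remarks of Section \ref{minimizantes} show that $(G,d_p)$ is a length space ($\overline{d}_p=d_p$), so Cohn-Vossen's theorem (\cite[Theorem 2.5.28]{burago}) produces, for any $g_0,g_1\in G$, a continuous rectifiable curve $\gamma\colon[0,1]\to G$ with endpoints $g_0,g_1$ and $\ell_p(\gamma)=d_p(g_0,g_1)$. By Remark \ref{esconstante} we can reparametrize $\gamma$ to constant speed, which makes it Lipschitz. Corollary \ref{suaave} then immediately yields that $\gamma$ is $C^{\infty}$ and that $v=\gamma^{-1}\dot\gamma$ satisfies the Euler-Lagrange equation (\ref{variational}). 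Theorem \ref{closedrange}, whose proof goes through the Legendre transform and Theorem \ref{existenciatransformed}, provides unique globally defined $C^{\infty}$ solutions of (\ref{variational}) for any initial velocity; applied to $v(0)=g_0^{-1}\dot\gamma(0)$, this settles the ``moreover'' clause of the statement.

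For the uniqueness of the minimizer itself, suppose $\gamma_1,\gamma_2$ are two constant-speed rectifiable minimizers sharing both endpoints. By the argument above both are $C^{\infty}$ solutions of the Euler-Lagrange equation, so if their initial velocities coincide they must agree globally by Theorem \ref{closedrange}. The main obstacle is excluding the case of distinct initial velocities producing the same terminal point with the same minimal length. To handle this I plan to use the rigidity of extremals captured by Remark \ref{equivariant}, namely the representation $v(t)=\nu(t)u(t)v(0)u^*(t)$ with smooth unitary curves $\nu,u$ based at the identity, together with the invariance of the spectrum and multiplicities of $|v(t)|$ along the flow; combined with the structural properties of solutions of the Hamilton equation (\ref{euler}) from Section \ref{energia}, these constraints should force the endpoint map sending an initial velocity to the terminal position of its extremal to be injective on the set of velocities realizing the minimal length, giving the desired uniqueness.
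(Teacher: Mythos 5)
Your first two paragraphs reproduce the paper's argument exactly: completeness (Lemma \ref{completo}) plus local compactness of $(G,d_p)$ and the fact that $\overline{d}_p=d_p$ allow Cohn--Vossen's theorem to produce a rectifiable minimizer, which after reparametrization to constant speed is Lipschitz, and then Corollary \ref{suaave} gives smoothness and the Euler--Lagrange equation, with Theorem \ref{closedrange} supplying uniqueness of the solution for given initial data. That part is correct and is precisely how the paper proceeds.

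The gap is in your third paragraph. What you offer there is a plan, not a proof, and the plan cannot work: the endpoint map on minimizing initial velocities is not injective in general. Already for $N=1$, $G=GL(1)\cong\mathbb{C}^{\times}$ carries (for every $p$) the left-invariant metric $|z^{-1}dz|$, which under $z=e^{w}$ is the flat cylinder metric on $\mathbb{C}/2\pi i\mathbb{Z}$; the points $g_0=1$ and $g_1=-1$ are then joined by the two distinct curves $\gamma_{\pm}(t)=e^{\pm i\pi t}$, both of which have length $\pi=d_p(1,-1)$, are $C^{\infty}$, and solve the Euler--Lagrange equation (constant normal velocity, Section \ref{normales}), but have different initial velocities $\pm i\pi$. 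So no rigidity argument based on Remark \ref{equivariant} or on the constancy of singular values can force injectivity of the endpoint map on the set of minimizing velocities. Note that the paper itself supplies no argument for endpoint uniqueness beyond what you already have: its proof of the theorem is exactly the Cohn--Vossen existence discussion together with Corollary \ref{suaave}, so the uniqueness it actually establishes is that any rectifiable minimizer is smooth, satisfies (\ref{variational}), and is determined by its initial conditions (the ``Moreover'' clause). You should therefore drop the attempted strengthening and read (and prove) the uniqueness claim only in that initial-condition sense, which your second paragraph already covers.
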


\subsection{Local property of solutions}
In this section we show that solutions of the Euler-Lagrange  equation  have a local minimality property for the $p$-energy functional. To this effect, we recall several results from \cite{coco} concerning the Lagrangian $ E_p(x)=\|x\|_p^p$.
For $v,y\in\mathcal A$, its   second differential
$$
(D^2 E_p)_v(x,y)=\frac{d^2}{dsdt}{ E_p}\bigg|_{s=t=0}(v+sx+ty)
$$
was computed in \cite{coco} by Mata-Lorenzo and Recht:
$$
Q_v(z)=(D^2 E_p)_v(z,z)=p\|\,|z||v|^{n-1}\|_2^2+n\|\sum_{k=0}^{n-2}\| \,|v|^{n-k-2} |z^*v+v^*z| \,|v|^k \|_2^2
$$
Let us collect some facts on this quadratic form in the following proposition, the proofs can be found in  \cite{coco}.
\begin{prop}
Fix $v\in\mathcal A$. Then $Q_v:\mathcal A\to\mathbb R_{\ge 0}$. It is strictly positive if $p=2$. For $p>2$, $z\in\mathcal A$ is a direction of degeneracy of $Q_v$ if and only if $zv^*=0=v^*z$. In that case, $E(s)= E_p(v+sz)=\|v\|_p^p+s^p\|z\|_p^p$.

\medskip

In any case, there exists $\epsilon=\epsilon(p)>0$ such that if $z\in\mathcal A$ and $\|z\|_p<\epsilon(p)$, then
$$
E(s)\ge E(0)+sE'(0)+\|z\|_p^p|s|^p
$$
for any $s\in [-1,1]$.
\end{prop}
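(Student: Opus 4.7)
My plan is to compute $Q_v$ explicitly, identify its kernel, derive the exact formula for $E(s)$ on that kernel, and then combine these to obtain the local lower bound.

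To compute $Q_v(z)$, differentiate $E_p(v+sz)=\tau((v^*v+sS+s^2|z|^2)^n)$ twice at $s=0$, with $S:=v^*z+z^*v$. The $s^2$-coefficient decomposes into two types: from two factors of $sS$, $\sum_{i+j+k=n-2}\tau((v^*v)^iS(v^*v)^jS(v^*v)^k)$; and from one factor of $s^2|z|^2$, $n\,\tau((v^*v)^{n-1}|z|^2)$. Cyclicity of $\tau$ together with the symmetry $i\leftrightarrow k$ rewrites the first as $\tfrac{n}{2}\sum_k\||v|^{n-k-2}S|v|^k\|_2^2$, and the second is $n\||z||v|^{n-1}\|_2^2$; doubling (since $Q_v$ is twice the $s^2$-coefficient) yields the stated formula as a nonnegative sum of squared Hilbert--Schmidt norms, so $Q_v\ge 0$. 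For $p=2$ ($n=1$) only the second type appears, giving $Q_v(z)=2\|z\|_2^2>0$ for $z\neq 0$.

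For $p>2$, $Q_v(z)=0$ forces each summand to vanish. The first vanishes iff $|z||v|=0$, i.e., $R(z^*)=R(|z|)\subseteq N(|v|)=N(v)$, i.e., $zv^*=0$. Granted this, $zP=0$ and $vP=v$ where $P:=P_{R(v^*)}$, so the $k=n-2$ summand $\|S|v|^{n-2}\|_2^2=0$ becomes $SP=v^*(zP)+z^*(vP)=z^*v=0$, equivalently $v^*z=0$. The converse is immediate: if $v^*z=zv^*=0$ then $S=0$ and $|z||v|^{n-1}=0$, so every summand vanishes. Under this degeneracy condition, $|v+sz|^2=|v|^2+s^2|z|^2$ and $|v|^2|z|^2=v^*(vz^*)z=0=|z|^2|v|^2$ (using $zv^*=0\Rightarrow vz^*=0$), so the two positive matrices mutually annihilate; the binomial collapses to $(|v|^2+s^2|z|^2)^n=|v|^{2n}+s^{2n}|z|^{2n}$, and tracing gives $E(s)=\|v\|_p^p+s^p\|z\|_p^p$.

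For the quantitative local bound, expand the polynomial $E(s)=E(0)+sE'(0)+\sum_{k=2}^{p}a_k(z)s^k$ of degree exactly $p$, with $a_k$ homogeneous of degree $k$ in $z$. The leading coefficient is $a_p(z)=\tau(|z|^{2n})=\|z\|_p^p$, so the inequality reduces to $\sum_{k=2}^{p-1}a_k(z)s^k\ge 0$ on $|s|\le 1$. The kernel $D=\{u\in\mathcal A:v^*u=uv^*=0\}$ is a linear subspace, and the exact formula above forces $a_k|_D\equiv 0$ for $2\le k\le p-1$. Decomposing $z=z_d+z_n\in D\oplus D^\perp$ HS-orthogonally, Cauchy--Schwarz applied to the PSD polar form of $Q_v=2a_2$ (whose kernel contains $D$) gives $a_2(z)=a_2(z_n)\ge c(v)\|z_n\|_p^2$ by compactness on the unit $p$-sphere of $D^\perp$. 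Each $a_k$ for $k\ge 3$ vanishes on $D$, so a Taylor expansion in $z_n$ around $z_d$ extracts at least one factor of $z_n$, yielding $|a_k(z)s^k|\le C_k\|z_n\|_p\|z\|_p^{k-1}$ on $|s|\le 1$. Collecting, $\sum_{k=2}^{p-1}a_k(z)s^k\ge \|z_n\|_p s^2\bigl(c(v)\|z_n\|_p-C\|z\|_p(1+\|z\|_p+\cdots+\|z\|_p^{p-3})\bigr)$, nonnegative for $\|z\|_p<\varepsilon(p)$.

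The main obstacle is the uniformity of this final estimate near $D$: the naive bound $|a_k(z)|\le C\|z\|_p^k$ ignores the vanishing of $a_k$ on $D$, and one must refine it to pull out the additional factor of $\|z_n\|_p$ that matches the scale of the $a_2$ lower bound. When $z$ is very close to $D$ (so $\|z_n\|_p\ll\|z\|_p$), even this refinement is delicate, and the argument leans crucially on the fact that $E_p$ is a polynomial of degree exactly $p=2n$ in $s$ whose leading coefficient is exactly $\|z\|_p^p$, as developed in the cited work \cite{coco}.
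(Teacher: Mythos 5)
Your attempt supplies a proof where the paper gives none (the paper defers the whole proposition to \cite{coco}), and the first three assertions are handled correctly: the computation of the $s^2$-coefficient of $\tau((v^*v+sS+s^2z^*z)^n)$, $S=v^*z+z^*v$, as a sum of squared $2$-norms, hence $Q_v\ge 0$ and strict positivity when $p=2$; the characterization of degeneracy for $p>2$ (the vanishing of $\||z|\,|v|^{n-1}\|_2$ gives $zv^*=0$, and then the $k=n-2$ summand gives $SP=z^*v=0$, hence $v^*z=0$, with the easy converse); and the exact formula $E(s)=\|v\|_p^p+s^p\|z\|_p^p$ on such directions, since there $S=0$ and $|v|^2|z|^2=|z|^2|v|^2=0$.

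The last, quantitative, claim is not actually proved, and the gap is exactly where you point. Writing $E(s)=E(0)+sE'(0)+\sum_{k=2}^{p}a_k(z)s^k$ with $a_p(z)=\|z\|_p^p$, you must show $\sum_{k=2}^{p-1}a_k(z)s^k\ge 0$ for $|s|\le 1$ and $\|z\|_p$ small. Your lower bound $a_2(z)\ge c(v)\|z_n\|_p^2$ is quadratic in $\|z_n\|_p$, but your upper bound $|a_k(z)|\le C_k\|z_n\|_p\|z\|_p^{k-1}$ for $3\le k\le p-1$ extracts only one factor of $\|z_n\|_p$; consequently the right-hand side of your final display is negative whenever $0<\|z_n\|_p\ll\|z\|_p$, no matter how small $\varepsilon$ is chosen, so it cannot certify nonnegativity, and the argument is in effect deferred back to \cite{coco}. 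The missing step is the refinement $|a_k(z)|\le C_k\|z_n\|_p^2\|z\|_p^{k-2}$, i.e.\ that for $z_d$ in the degeneracy subspace $D=\{u\in\mathcal A: v^*u=uv^*=0\}$ the part of $a_k(z_d+z_n)$ linear in $z_n$ vanishes identically for $2\le k\le p-1$. This is within reach of your own computation: a term of $\tau((|v|^2+sS+s^2|z|^2)^n)$ that is linear in $z_n$ contains either exactly one factor $S=v^*z_n+z_n^*v$ or exactly one cross factor $z_d^*z_n$ or $z_n^*z_d$, all remaining factors being $|v|^2$ or $z_d^*z_d$; using $v^*z_d=z_dv^*=0$ (hence $|v|^2z_d^*z_d=z_d^*z_d|v|^2=0$, $|v|^2z_d^*=0$, $z_d|v|^2=0$) together with cyclicity of the trace, every such cyclic product contributing to $a_k$ with $2\le k\le p-1$ vanishes. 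With that estimate your chain of inequalities closes, and $\varepsilon$ can be taken to depend on $v$ and $p$, as the paper's remark following the proposition requires.
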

We remark that $\epsilon$ depends  on $v$ and $p$.
By Remark \ref{equivariant}, if $v(t)$ is a solution of the Euler-Lagrange equation, there exist smooth curves of unitaries $\nu_t$, $u_t$, such that $v(t)=\nu_t u_t v(0)u^*_t$. Thus
$$
\|v(t)+sz(t)\|_p^p=\|\nu_t u_tu_tv(0)u^*_t+sz(t)\|_p^p=\|v(0)+s u_t^*\nu^*_t z(t)u_t\|_p^p,
$$
and since $\|u_t^*\nu^*_tz(t)u_t\|_p^p=\|z(t)\|_p^p$, this clearly implies that $\epsilon$ can be chosen uniformly along the solution $v(t)$.

Let $\gamma_s$ be a variation of an extremal path $\gamma$, and put $v_s=\gamma_s^{-1}\dot{\gamma_s}=v(t)+sx(s,t)$. Then the expression above reads
$$
\|v_s\|_p\ge \|v(t)\|_p+s DE_{v(t)}(x(s,t))+\|x(s,t)\|_p^p |s|^p,
$$
provided $\|x(s,t)\|_p<\epsilon(p,v(0))$.

\begin{prop} Let $\gamma(t)$, $t\in \mathbb{R}$, be a smooth curve in $G$, such that $v(t)=\gamma^{-1}(t)\dot{\gamma}(t)$ is a solution of the Euler-Lagrange equation. Pick $\epsilon=\epsilon(p,v(0))$ as above. If  $\mu(t)$, $t\in I$ is a $C^1$ curve in $G$, such that $\|\mu^{-1}(t)\dot{\mu}(t)-v(t)\|_p<\epsilon$ for $t\in I$, then
$$
{\mathcal E}_p(\gamma)\le {\mathcal E}_p(\mu).
$$
\end{prop}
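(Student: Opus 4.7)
The plan is to apply the Mata-Lorenzo-Recht pointwise inequality from the previous proposition at $s=1$ at each $t\in I$, integrate over $I$, and then eliminate the resulting first-order term using the first variation formula of Section \ref{energia} together with the Euler-Lagrange equation (\ref{variational}).

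Set $z(t):=\mu^{-1}(t)\dot{\mu}(t)-v(t)$, so that $\|z(t)\|_p<\epsilon$ by hypothesis, where $\epsilon$ is uniform along $v(t)$ by the paragraph preceding the proposition (which invokes the conjugation $v(t)=\nu_t u_t v(0)u_t^*$ from Remark \ref{equivariant}). The pointwise Mata-Lorenzo-Recht inequality at $s=1$ then yields, for every $t\in I$,
$$\|v(t)+z(t)\|_p^p \;\geq\; \|v(t)\|_p^p \,+\, DE_{v(t)}(z(t)) \,+\, \|z(t)\|_p^p,$$
and integrating over $I$ gives
$$\mathcal{E}_p(\mu) \;\geq\; \mathcal{E}_p(\gamma) \,+\, \int_I DE_{v(t)}(z(t))\,dt \,+\, \int_I \|z(t)\|_p^p\,dt.$$

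To eliminate the middle integral, view $\mu$ as the $s=1$ value of a fixed-endpoints variation $\gamma_s$ of $\gamma$ with translated velocities $v_s=v+sz$ linear in $s$, as is natural in a local-minimality framework. Since $\partial_s v_s\big|_{s=0}=z$, the middle integral equals the first variation $\frac{d}{ds}\mathcal{E}_p(\gamma_s)\big|_{s=0}$. By the first variation formula of Section \ref{energia}, this derivative equals $\langle v(v^*v)^{n-1},\eta\rangle\big|_{\partial I}$ minus an interior integral of the Euler-Lagrange expression against $\eta:=\gamma_s^{-1}\gamma_s'\big|_{s=0}$. The interior integral vanishes because $\gamma$ satisfies (\ref{variational}); the boundary term vanishes because $\eta$ vanishes on $\partial I$ for a fixed-endpoints variation. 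Hence $\int_I DE_{v(t)}(z(t))\,dt=0$, and the previous inequality collapses to
$$\mathcal{E}_p(\mu) \;\geq\; \mathcal{E}_p(\gamma)+\int_I\|z(t)\|_p^p\,dt \;\geq\; \mathcal{E}_p(\gamma),$$
as claimed.

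The main subtlety, which I expect to need the most care, is the simultaneous demand of linearity of $v_s$ in $s$ (which makes $\partial_s v_s\big|_{s=0}=z$) and the fixed-endpoints condition on $\gamma_s$: the ODE $\dot\gamma_s=\gamma_s(v+sz)$ admits only one free initial condition, so prescribing both endpoints of $\gamma_s$ for all $s$ is an overdetermination. This is reconciled by interpreting the hypothesis as implicitly imposing that $\mu$ agrees with $\gamma$ at both endpoints of $I$ (standard in local-minimality statements), or equivalently by running the argument along the nonlinear fixed-endpoints homotopy $\gamma_s(t)=\gamma(t)\exp(s\log(\gamma^{-1}\mu)(t))$: at $s=1$ the Mata-Lorenzo-Recht bound still produces the integrand $DE_{v(t)}(z(t))$, whose $t$-integral is then controlled by the vanishing first variation of this genuine fixed-endpoints family.
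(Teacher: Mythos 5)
Your proposal follows essentially the same route as the paper's proof: apply the Mata-Lorenzo--Recht pointwise lower bound at $s=1$ to $z=\mu^{-1}\dot\mu-v$, integrate over $I$, and annihilate the first-order term $\int_I DE_{v(t)}(z(t))\,dt$ using that $v$ is a critical point of the $p$-energy. The fixed-endpoints subtlety you raise is genuine, but the paper's own proof is no more detailed at that step---it simply asserts that this integral vanishes by criticality---so your argument coincides with the paper's modulo that shared gloss (and note your $\exp(s\log(\gamma^{-1}\mu))$ patch, as written, computes the first variation in a direction that need not equal $z$, so it does not add rigor beyond what the paper claims).
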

\begin{proof}
Denote by $m(t)=\mu^{-1}(t)\dot{\mu}(t)$ and consider the variation of $v$ given by $v_s(t)=v(t)+s[m(t)-v(t)]$. By the result cited above,
$$
\|m(t)\|_p^p=E_p(v(t)+(m(t)-v(t)))\ge E_p(v(t))+D E_{v(t)}(m(t)-v(t)) +\|m(t)-v(t)\|_p^p,
$$
for $t\in I$. Therefore
$$
{\mathcal E}_p(\mu)=\int_I \|m(t)\|_p^p dt\ge  \int_I\|v(t)\|_p^p dt +\int_I D E_{v(t)}(m(t)-v(t)) dt
$$
$$
={\mathcal E}_p(\gamma)+\int_I D E_{v(t)}(m(t)-v(t)) dt.
$$
The second integral on the right hand side vanishes, because $v$ is a critical point of the $p$-energy functional. It follows that ${\mathcal E}_p(\mu)\ge {\mathcal E}_p(\gamma)$.
\end{proof}

\section{Classical linear Banach-Lie groups}\label{schatten}

Let $\mathcal H$ be a complex separable, infinite dimensional Hilbert space. In this section we examine the geometry of the left invariant metric in the classical linear groups
$$
G_p(\h)=\{g\in\b(\h): g-1\in\b_p(\h)\},
$$
where $\b_p(\h)$ is the $p$-Schatten ideal of $\b(\h)$, $2\le p=2n <\infty$ an even integer. The Banach-Lie algebra of $G_p(\h)$ is the ideal $\b_p(\h)$. The natural norm here is the $p$-norm
$\|x\|_p=Tr((x^*x)^n)^{\frac{1}{p}}$, with $Tr$ the (possibly infinite) trace of $\b(\h)$ given by
$$
Tr(x)=\sum_j \langle x\xi_i,\xi_i\rangle,
$$
where $\langle,\rangle$ denotes the inner product of $\h$ and $(\xi_i)_{i\ge 1}$ is any orthonormal basis of $\h$.

We consider smooth curves $\alpha:[0,1]\to \b_p(\h)$, that is, we use the topology induced by the $p$-norm. With this topology, $\b_p(\h)$ is a complete metric space.

The left  invariant metric in the tangent bundle of $G_p(\h)$ is given as follows: if $x$ is tangent at $g\in G_p(\h)$ (which means that belongs to $\b_p(\h)$), then
$$
\|x\|_g=\|g^{-1}x\|_p.
$$
Many of the computations on this example are formally similar to the ones done in the previous sections, with certain small modifications. If $g(t)$ is a smooth curve in $G_p(\h)$, again we denote by $v(t)=g^{-1}(t)\frac{d}{dt}g(t)$, and consider  the Euler-Lagrange equation (\ref{variational}) of the variational problem for the $p$-energy functional,
$$
\frac{d}{dt} v(v^*v)^{n-1}+((vv^*)^n-(v^*v)^n)=0.
$$
The Legendre transformation in this context is a map among dual spaces
$$
\b_p(\h)\ni v\mapsto w=v(v^*v)^{n-1}\in\b_q(\h),
$$
where $1/p+1/q=1$, and the Hamilton equation (\ref{euler}) is again given by
$$
\dot{w}=|w|^q-|w^*|^q.
$$

By Remark \ref{lipschitz}, the Hamilton equation has local solutions. As before, if $w$ is a solution of this equation, then $b(t)=|w(t)|^2=w^*(t)w(t)$ is a solution of (\ref{ecuacionauxiliar})
\begin{equation}\label{auxdiscreto}
\dot{b}=[b^{\alpha},K]=b^{\alpha}K-Kb^{\alpha},
\end{equation}
where $K^*=-K$ is constant, and $\alpha=q/2$.

The key to prove existence of solutions of the Hamilton equation for all $t\in\mathbb{R}$ is, as before, the invariance of the spectrum over time. Recall the auxiliary equation (\ref{ecuacionauxiliar}), adapted to this context: for $K^*=-K\in\b_q(\h)$ and $\alpha=q/2$, consider
$$
\dot{b}=[b^\alpha,K].
$$

\begin{lem}
Let $b(t)$ in $\b_q(\h)$ be a positive solution of  $\dot{b}=[b^\alpha,K]$. Then the eigenvalues of $b(t)$, and their multiplicities, do not depend on $t$.
\end{lem}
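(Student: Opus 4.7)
My plan is to mirror the proof of the matrix-case lemma above, paying attention to the fact that $b(t)\in\b_q(\h)$ is a compact positive operator, so that only sufficiently high powers $b(t)^n$ are trace class and any test function must vanish to controlled order at the origin. The argument has three steps: constancy of high-power traces, approximation of continuous compactly supported functions by polynomials with a weight $x^N$, and extraction of the nonzero spectral data via bump functions.

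First, I would fix an integer $n\ge \lceil q\rceil+1$. Since the eigenvalues $\lambda_i(t)$ of $b(t)$ satisfy $\sum_i m_i(t)\lambda_i(t)^q<\infty$ and tend to zero, $b(t)^n$ is trace class. A standard H\"older estimate in Schatten norms justifies interchanging $d/dt$ with $Tr$ under the hypothesis that $b(t)$ is $C^1$ in the $q$-norm, so that
$$
\frac{d}{dt}\,Tr(b^n)\;=\;n\,Tr([b^\alpha,K]\,b^{n-1})\;=\;n\bigl(Tr(b^\alpha K b^{n-1})-Tr(Kb^{n-1+\alpha})\bigr).
$$
Since $b^{n-1+\alpha}$ is trace class (because $n-1+\alpha>q$) and $K\in\b_q(\h)$, cyclicity of $Tr$ applies and gives $Tr(b^\alpha K b^{n-1})=Tr(Kb^{n-1+\alpha})$. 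Thus $p_n(t):=Tr(b(t)^n)$ is constant for every integer $n\ge\lceil q\rceil+1$.

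Next, I would restrict to a compact interval $[t_0,t_1]$, on which $M:=\sup_t\|b(t)\|_\infty$ is finite by continuity of $b(t)$ in the $q$-norm. Let $\mu_t$ denote the counting measure $\sum_i m_i(t)\,\delta_{\lambda_i(t)}$ on $(0,\infty)$ of the nonzero eigenvalues of $b(t)$ with their multiplicities. For $f\in C_c((0,\infty))$ with $\mathrm{supp}(f)\subset[\varepsilon,M]$, write $f(x)=x^N g(x)$ where $N=\lceil q\rceil+1$ and $g$ is the continuous function on $[0,M]$ equal to $f(x)/x^N$ on $[\varepsilon,M]$ and extended by zero below $\varepsilon$. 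Approximating $g$ uniformly on $[0,M]$ by polynomials $q_k$ via Weierstrass,
$$
\| f(b(t))-b(t)^N q_k(b(t))\|_1\;\le\;\|g-q_k\|_\infty\sum_i m_i(t)\lambda_i(t)^N\;\le\;C\,\|g-q_k\|_\infty,
$$
with $C$ bounded in terms of $\|b(t)\|_\infty^{N-q}\|b(t)\|_q^q$ uniformly on $[t_0,t_1]$. Hence $Tr(f(b(t)))=\lim_k Tr(b(t)^N q_k(b(t)))$ is a uniform-in-$t$ limit of linear combinations of the invariants $p_{N+j}(t)$ from the first step; in particular $\int f\,d\mu_t$ is independent of $t$.

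Finally, as in the matrix proof, suppose $\lambda_0>0$ appears in $\sigma(b(t_0))$ with multiplicity $m_0$ and in $\sigma(b(t_1))$ with multiplicity $m_1$ (possibly zero). Pick $f\ge 0$ continuous, with $f(\lambda_0)=1$ and support a small neighborhood of $\lambda_0$ avoiding the origin and every other point of the (discrete) spectra of $b(t_0)$ and $b(t_1)$. Then $f(b(t_i))$ equals the spectral projection of $b(t_i)$ onto the $\lambda_0$-eigenspace, so $Tr(f(b(t_i)))=m_i$, and the previous step forces $m_0=m_1$. This rules out any change of nonzero eigenvalues or their multiplicities. The main obstacle is the trace-norm (rather than operator-norm) convergence in the second step: without the $x^N$ weighting, uniform approximation of $f$ by polynomials would only yield operator-norm convergence of $f(b(t))$, which is insufficient to pass $Tr$ to the limit; this is precisely what forces the choice $N\ge q$, so that the tail $\sum_i m_i(t)\lambda_i(t)^N$ is summable uniformly in $t$.
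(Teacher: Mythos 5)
Your argument is correct, but it takes a genuinely different route from the paper's proof of this lemma. The paper localizes in the spectrum: it fixes one eigenvalue $\lambda_k(t_0)$, surrounds it by a circle $C_k$, uses semicontinuity of the spectrum to define the Riesz projections $E_k(t)=\frac{1}{2\pi i}\oint_{C_k}(z-b(t))^{-1}dz$ for $t$ near $t_0$, and shows that $Tr\bigl((b(t)E_k(t))^m\bigr)$ is constant for every $m$; since $b(t)E_k(t)$ is finite rank, functional calculus is unproblematic and one reads off that the eigenvalue and its multiplicity $Tr(E_k(t))$ are locally constant, hence constant. The price there is an extra commutator term coming from $\dot E_k(t)$ and a local-in-$t$ argument. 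You instead globalize the paper's matrix-case lemma: you prove constancy of $Tr(b(t)^n)$ only for integer powers $n\ge\lceil q\rceil+1$ (which is all that trace-class considerations permit), and then recover $Tr(f(b(t)))$ for bump functions $f$ vanishing near $0$ through the weighted Weierstrass step $f(x)=x^Ng(x)$, which converts uniform approximation of $g$ into trace-norm approximation of $f(b(t))$ because $Tr(b(t)^N)$ is finite (indeed constant by your first step). This avoids differentiating any spectral projection and compares two arbitrary times directly, at the cost of the trace-class bookkeeping: the $x^N$ weight, the H\"older justification of $\frac{d}{dt}Tr(b^n)=n\,Tr(\dot b\, b^{n-1})$, and the cyclicity $Tr(b^\alpha K b^{n-1})=Tr(Kb^{n-1+\alpha})$, all of which you handle correctly since $n-1+\alpha\ge q$ and $b^\alpha\in\b_2(\h)$, $Kb^{n-1}\in\b_1(\h)$. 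Like the paper's proof, yours says nothing about the multiplicity of the eigenvalue $0$, which is harmless for the intended application. In short, yours is the natural infinite-dimensional adaptation of the paper's earlier matrix lemma, while the paper's own argument buys finite-rank compressions (no convergence issues, multiplicities read off as $Tr(E_k(t))$) in exchange for the Riesz-projection machinery.
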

\begin{proof}
Fix $t_0$. Since $b(t_0)$ lies in $\b_q(\h)$ and is positive, the non-nil part of its spectrum can be ordered as a (possibly finite) decreasing sequence of positive numbers $\lambda_k(t_0)$. Pick one of these $\lambda_k(t_0)$ and let $C_k$ be a circle centered at $\lambda_k(t_0)$ such that no other eigenvalue of $b(t_0)$ lies inside $C_k$. By the semicontinuity property of the spectrum, there exists $r>0$ such that for $|t-t_0|<r$, the spectrum of $b(t)$ does not intersect $C_k$. For such $t$, let $E_k(t)$ be the self-adjoint projection
$$
E_k(t)=\frac{1}{2\pi i}\oint_{C_k} (z-b(t))^{-1}d z.
$$
Note that $b(t)$ and $E_k(t)$ commute, and therefore $b(t)E_k(t)$ is a positive operator.
For any $m\ge 0$, the power $(b(t)E_k(t))^m$ (equal to $b(t)^mE_k(t)$ if $m\ge 1$) has constant trace. Indeed, if $m\ge 1$,
$$
\frac{d}{d t} Tr(b(t)^mE_k(t))=Tr(\dot{b}(t)b(t)^{m-1}E_k(t))+\frac{1}{2\pi i}\oint_{C_k} Tr(b(t)^m(z-b(t))^{-1}\dot{b}(t)(z-b(t))^{-1})dz.
$$
The first term equals
$$
Tr\left([b(t)^\alpha,K]b(t)^{m-1}E_k(t)\right)=Tr\left([b(t)^{\alpha+m-1}E_k(t),K]\right)=0.
$$
And in the second term,
$$
\oint_{C_k}Tr\left(b(t)^m(z-b(t))^{-1}[b(t)^\alpha,K](z-b(t))^{-1}\right)dz
$$
$$
=\oint_{C_k}Tr\left([b(t)^{\alpha+m}(z-b(t))^{-2},K]\right) dz =0.
$$
It follows that if $f$ is a continuous function in the real line, then $Tr(f(b(t)E_k(t)))$ is constant. Therefore the spectrum of $b(t)E_k(t)$ is constant. At $t=t_0$ it consist of $0$ and $\lambda_k(t_0)$. It follows that the spectrum of $b(t)$ is locally constant, and thus constant. Moreover, take $f$ a continuous function which takes the value $\frac{1}{\lambda_k}$ in a neighbourhood on $\lambda_k$, and is zero on $\lambda_j$ for $j\ne k$. Then
$$
Tr(f(b(t)E_k(t))=Tr(E_k(t))
$$
is constant, i.e. the multiplicity of $\lambda_k$ is independent of $t$.
\end{proof}
Then we have the following result, analogous to Theorem \ref{existenciatransformed}, with essentially the same proof, which we omit. We note that local existence is guaranteed because the map $x\mapsto |x|^q$ is Lipschitz in $\b_q(\h)$, as remarked before.
\begin{teo}
In the present context, the Hamilton equation (\ref{euler})
\begin{equation}\label{eulerdiscr}
\left\{
\begin{array}{l} \dot{w}=|w|^q-|w^*|^q \\
w(0)=w_0 \end{array}
\right.
\end{equation}
 has a unique $C^1$ solution for any initial condition $w_0\in\b_q(\h)$, defined for all $t\in\mathbb{R}$,
\end{teo}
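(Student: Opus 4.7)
The plan is to reproduce verbatim the three-step argument used for Theorem \ref{existenciatransformed} in the matrix case, since all the ingredients needed for the Schatten setting are already in place. First I would invoke local existence and uniqueness: the right-hand side $F(w)=|w|^q-|w^*|^q$ of the Hamilton equation is locally Lipschitz on $\b_q(\h)$ by Remark \ref{lipschitz} (with $r=q$ and $\a=\b_q(\h)$), so Picard-Lindel\"of on Banach spaces (\cite[Chapter IV]{lang}) provides a unique $C^1$ local solution $w:(t_-,t_+)\to \b_q(\h)$ with $w(0)=w_0$. A careful look at the proof of that ODE theorem shows that the length of $(t_-,t_+)$ depends only on $\|w_0\|_q$ and on the Lipschitz constant $c(q,d)$ of Remark \ref{lipschitz} on a ball of radius $d$ around the origin.

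Next I would transfer the conservation of the $q$-norm along the local solution. The computation that produced the auxiliary equation (\ref{ecuacionauxiliar}) in the matrix section is purely algebraic and carries over unchanged: along any local solution $w$, the skew-adjoint part $k$ of $w$ is constant (since $\dot w$ is self-adjoint by the equation itself), and $b(t)=w(t)^*w(t)\in\b_{q/2}(\h)$ satisfies $\dot b=[b^{q/2},K]$ with $K=2k$, which is exactly (\ref{auxdiscreto}). By the infinite-dimensional lemma just proved, the non-zero eigenvalues of $b(t)$ together with their multiplicities are independent of $t$, hence
$$
\|w(t)\|_q^q=Tr(|w(t)|^q)=Tr(b(t)^{q/2})=\sum_j m_j\lambda_j^{q/2}
$$
is a finite constant in $t$.

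Finally I would globalize. Since $\|w(t)\|_q$ is constant, posing the Hamilton equation afresh with initial condition $w_1=w(t_+/2)$, Step 1 produces an interval of existence of exactly the same length as the original, because its hypotheses only involved $\|w_0\|_q$. Iterating this construction on both sides of the origin builds a $C^1$ solution on all of $\real$; uniqueness is inherited from uniqueness of local solutions glued along overlapping intervals.

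The main (and essentially only) new technical obstacle relative to the matrix argument is the claim of constancy of the spectrum of $b(t)$ in the possible presence of eigenvalues accumulating at $0$, since the trivial polynomial trick of the matrix proof cannot be used when $Tr(b^n)$ need not be finite. That delicate point, however, is exactly what the preceding lemma has already handled, by extracting each isolated non-zero eigenvalue via its Riesz spectral projection and tracking $Tr(b(t)^m E_k(t))$ individually. Once that lemma is used as a black box, the remainder of the proof is a literal transcription of the finite-dimensional argument, which is why it is omitted in the text.
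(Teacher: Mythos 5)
Your proposal is correct and follows exactly the route the paper intends: the paper omits the proof precisely because it is the argument of Theorem \ref{existenciatransformed} (local existence via Remark \ref{lipschitz} and the Banach-space ODE theorem, conservation of $\|w(t)\|_q$ via the auxiliary equation, then iteration), with the preceding infinite-dimensional lemma replacing the polynomial trace trick. Your identification of that lemma as the only genuinely new ingredient is accurate.
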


\subsection{Reversing the Legendre transform}

Let $w$ be the solution of (\ref{euler}) with $w(0)=w_0$. Let $|w_0|=\sum_{i\ge 1} \lambda_i^{\frac12} p_i$, for $\lambda_i>0$. The invariance of the spectrum implies that
$$
b(t)=|w(t)|^2=\sum_{i\ge 1} \lambda_i p_i(t),
$$
with $p_i(0)=p_i$. Since $b(t)$ is continuously differentiable in $\b_q(\h)$, and each $p_i(t)$ can be obtained as a Riesz integral of $b(t)$, it follows that $p_i(t)$ are continuously differentiable in the parameter $t$. Therefore it is apparent that finite sums  of the $p_i(t)$ are continuously differentiable maps. Though it is not clear if $p_0(t)$, the kernel projections (or equivalently, $p_0^\perp(t)=\sum_{i\ge 1}p_i(t)$) are continuously differentiable, if the spectrum of $|w_0|$ is infinite.

\begin{rem}\label{sumasuma}
With the above notations, if $w=\Omega|w|$ is the polar decomposition of $w$, then
$$
w(t)=\sum_{i\ge 1}\lambda_i^{1/2} \Omega(t)p_i(t)=R(t)+K/2.
$$
with $K^*=-K$ constant and $R(t)^*=R(t)$. Note that if we denote $\sum_{i\ge 1}p_i(t)=p_0(t)^{\perp}=1-p_0(t)$, with $p_0(t)$ the projection onto the kernel of $w(t)$, then
$$
w(t)p_0^{\perp}(t)=w(t)\quad \mbox{ and } |w(t)|p_0^{\perp}(t)=|w(t)|.
$$
Since $p_0(t)w(t)=0$, then $p_0$ commutes with $R$ and $K$, and moreover
$$
p_0R=Rp_0=Kp_0=p_0K=0.
$$
By Remark \ref{descompolar}, the reversed Legendre transform of $w$ (and candidate of solution for the Euler-Lagrange equation (\ref{variational})) is therefore
\begin{equation}\label{elve}
v(t)=\sum_{i\ge 1}\lambda_i^{\frac{1}{2(2n-1)}} \Omega(t)p_i(t).
\end{equation}
Finally, note that the derivative of $w$ equals
$$
\dot{w}(t)=|w(t)|^q-|w(t)^*|^q=\sum_{i\ge 1} \lambda_i^{\alpha}(p_i(t)-\Omega(t)p_i(t)\Omega^*(t))
$$
where $\alpha=q/2$ as before.
\end{rem}
\begin{prop}
With the above notations, $v(t)\in\b_p(\h)$ given by equation (\ref{elve}) is the Legendre anti-transform of $w$, and it is continuous in $\mathbb{R}$.
\end{prop}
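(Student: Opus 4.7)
The plan is to verify the three assertions separately: $v(t)\in\b_p(\h)$, the algebraic identity $v(v^*v)^{n-1}=w$, and continuity in the $p$-norm. The first two are direct spectral computations, and the third will require a truncation argument to work around the potential discontinuity of $p_0(t)$.

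For the membership in $\b_p(\h)$ and the Legendre identity, I would work with the spectral formula (\ref{elve}) together with the orthogonality relations $p_i(t)p_j(t)=\delta_{ij}p_i(t)$ (for $i,j\geq 1$) and $\Omega^*(t)\Omega(t)=p_0^\perp(t)=\sum_{i\geq 1}p_i(t)$. The latter implies $p_i\Omega^*\Omega p_j=\delta_{ij}p_i$, which collapses the double sum in
$$
v^*v=\sum_{i,j\geq 1}\lambda_i^{\frac{1}{2(2n-1)}}\lambda_j^{\frac{1}{2(2n-1)}}p_i\Omega^*\Omega p_j=\sum_{i\geq 1}\lambda_i^{\frac{1}{2n-1}}p_i.
$$
Raising to the $n$-th power and taking the trace yields $\|v\|_p^p=\sum_i\lambda_i^{n/(2n-1)}\dim(p_i)$. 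Since $q/2=n/(2n-1)$ and $|w|^2=b=\sum_i\lambda_ip_i$ with $\|w\|_q^q=\tau(|w|^q)=\sum_i\lambda_i^{q/2}\dim(p_i)<\infty$, we conclude $\|v\|_p^p=\|w\|_q^q<\infty$. For the Legendre identity, the exponent computation $\frac{1}{2(2n-1)}+\frac{n-1}{2n-1}=\frac{1}{2}$ gives
$$
v(v^*v)^{n-1}=\sum_{i\geq 1}\lambda_i^{1/2}\Omega p_i=\Omega|w|=w.
$$

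For continuity, the key observation is that each eigenvalue $\lambda_i>0$ is isolated in $\sigma(b(t))$ (since $b(t)\in\b_q(\h)$ is positive with accumulation only at $0$), and by the previous lemma both the eigenvalues and their multiplicities are constant. Hence each spectral projection $p_i(t)$ for $i\geq 1$ is continuously differentiable via a Riesz integral $p_i(t)=\frac{1}{2\pi i}\oint_{C_i}(z-b(t))^{-1}dz$ over a small circle around $\lambda_i$. The finite partial sums $q_N(t)=\sum_{i=1}^N p_i(t)$ are therefore continuous. I would then observe that on the range of $q_N$ the operator $|w|$ is invertible, with inverse $r_N(t)=\sum_{i=1}^N\lambda_i^{-1/2}p_i(t)$ also continuous; the identity $|w|(t)r_N(t)=q_N(t)$ gives
$$
\Omega(t)q_N(t)=\Omega(t)|w|(t)r_N(t)=w(t)r_N(t),
$$
which is a product of continuous maps and hence continuous. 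Consequently the truncations
$$
v_N(t)=\sum_{i=1}^N\lambda_i^{\frac{1}{2(2n-1)}}\Omega(t)p_i(t)=w(t)\sum_{i=1}^N\lambda_i^{-\frac{n-1}{2n-1}}p_i(t)
$$
are continuous in $t$ in the $p$-norm.

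It remains to upgrade pointwise continuity of the $v_N$'s to continuity of $v$. Repeating the spectral computation from the first paragraph on the tail,
$$
\|v(t)-v_N(t)\|_p^p=\sum_{i>N}\lambda_i^{\frac{n}{2n-1}}\dim(p_i),
$$
which is independent of $t$ and is the tail of the convergent series $\|w\|_q^q=\sum_i\lambda_i^{q/2}\dim(p_i)$, hence tends to $0$ as $N\to\infty$. Thus $v_N\to v$ uniformly in $t\in\mathbb{R}$ in the $p$-norm, and the uniform limit of continuous maps is continuous. The main subtlety I anticipate is precisely this one: although $p_0^\perp(t)=\sum_{i\geq 1}p_i(t)$ is not known to be continuous when the spectrum of $|w_0|$ is infinite, the truncation trick sidesteps the issue because the partial sums $q_N$ are finite and the $p$-norm tail estimate is uniform in $t$ thanks to the invariance of the eigenvalues and multiplicities.
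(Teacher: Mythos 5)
Your proof is correct and follows essentially the same route as the paper: continuity of the finite truncations via $\Omega(t)p_i(t)=\lambda_i^{-1/2}w(t)p_i(t)$ (the paper phrases this as $w(t)p_i(t)=\lambda_i^{1/2}\Omega(t)p_i(t)$ being continuous), plus a $t$-independent tail estimate $\|v-v_N\|_p^p=\sum_{i>N}\lambda_i^{q/2}\,\mathrm{Tr}(p_i)$ coming from the constancy of eigenvalues and multiplicities, giving uniform convergence and hence continuity. Your explicit verification of the identity $v(v^*v)^{n-1}=w$ and of $\|v\|_p^p=\|w\|_q^q$ is the content the paper delegates to the preceding remarks, so there is no substantive difference.
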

\begin{proof}
That for each $t$, $v(t)$ is  the Legendre anti-transform of $w$ was established in the previous remark. Let us show that it is continuous. For each $j\ge 1$, the map $p_j(t)$ is continuous for $t\in\mathbb{R}$. Therefore
$$
w(t)p_j(t)=\lambda_j\Omega(t)p_j(t),
$$ is continuous. Then the partial sums of the series
$$
\sum_{i\ge 1}\lambda_i^{\frac{1}{2(2n-1)}}\Omega(t)p_i(t)
$$ are continuous in $t$. This series is  uniformly convergent in $\b_p(\h)$. Indeed,
if
$$
S_k=\sum_{i\ge k}\lambda_i^{\frac{1}{2(2n-1)}}\Omega(t)p_i(t),
$$
then
$$
(S_k^*S_k)^n=\left(\sum_{i\ge k}\lambda_i^{\frac{1}{2n-1}}\Omega(t)p_j(t)\Omega^*(t)\right)^n.
$$
Since $\Omega^*(t)\Omega(t)=p_0^\perp$, the projections $\Omega(t)p_j(t)\Omega^*(t)$ are pairwise orthogonal:
$$
\Omega(t)p_j(t)\Omega^*(t)\Omega(t)p_l(t)\Omega^*(t)=\Omega(t)p_j(t)p_0^\perp p_l(t)\Omega^*(t)=\Omega(t)p_j(t)p_l(t)\Omega^*(t)
$$
$$
=\delta_{j,l}\Omega(t)p_j(t)\Omega^*(t).
$$
Then
$$
(S_k^*S_k)^n=\sum_{i\ge k}\lambda_i^{\frac{n}{2n-1}}\Omega(t)p_j(t)\Omega^*(t).
$$
Thus
$$
\|S_k\|_p^p=\sum_{i\ge k}\lambda_i^{\frac{n}{2n-1}}Tr(\Omega(t)p_j(t)\Omega^*(t))=\sum_{i\ge k}\lambda_i^\alpha r_j,
$$
where $r_j$ is the constant $Tr(p_j(t))$. This series is convergent, it equals the $\alpha$-power of the $\alpha$-norm of the tail of the series $b(t)=\sum_{i\ge 1}\lambda_i p_i(t)$, which is uniformly convergent in $\b_\alpha(\h)$. Therefore $v$ is continuous.
\end{proof}

\begin{rem}
Note that if $\gamma$ is any rectifiable arc, defined as in Section \ref{minimizantes}, any minimizer of the $p$-length in this setting is also a critical point for the $p$-energy functional, with the same proof as in Proposition \ref{escritica}.

\medskip

Moreover, as in Remark \ref{esabso}, if we put $\nu=\gamma^{-1}\dot{\gamma}$, and $\omega=\nu (\nu^*\nu)^{n-1}$, then $t\mapsto \omega(t)$ is absolutely continuous, thus it is also a $C^1$ map.
\end{rem}

By combining the previous results, we obtain a characterization of minimizing arcs.

\begin{coro}
If $\gamma$ is an extremal of the $p$-energy functional (in particular, if $\gamma$ is a rectifiable minimizer for the $p$-length), then $\gamma$ is $C^1$, $v=\gamma^{-1}\dot{\gamma}$ is a solution of the Euler-Lagrange equation.
\end{coro}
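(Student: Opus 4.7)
The plan is to run the same machinery developed for matrix algebras, paying attention to the regularity available in the Schatten setting. Let $\gamma$ be a rectifiable minimizer for the $p$-length (or more generally an extremal of the $p$-energy). By the preceding remark, the relevant minimizer-to-critical-point argument from Proposition \ref{escritica} transfers verbatim: $\gamma$ satisfies the Euler-Lagrange equation (\ref{variational}) in the a.e./weak sense of Remark \ref{esabso}. In particular, writing $\nu=\gamma^{-1}\dot{\gamma}$ (defined almost everywhere) and $\omega=\nu(\nu^*\nu)^{n-1}$, the map $t\mapsto \omega(t)$ is absolutely continuous and its derivative $\dot{\omega}$ equals $|\omega|^q-|\omega^*|^q$ almost everywhere, that is, $\omega$ is an a.e.-solution of the Hamilton equation (\ref{euler}) in $\b_q(\h)$.

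Next I would upgrade the regularity of $\omega$ from absolutely continuous to $C^1$, by the same spectral invariance trick used in the finite-dimensional case. Setting $b(t)=\omega^*(t)\omega(t)$, the computation at the beginning of the section shows that $b$ is Lipschitz and an a.e.-solution of the auxiliary equation $\dot{b}=[b^\alpha,K]$ with $K$ constant and $\alpha=q/2$. The Schatten version of the spectrum-invariance lemma applies in its almost-everywhere form (as in Remark \ref{extensiondelremark}, since $Tr((bE_k)^m)$ remains constant when tested against the a.e.-differentiable $b$), giving that the eigenvalues and multiplicities of $|\omega(t)|$ are independent of $t$. Using the Riesz integral representation for spectral projections, as in the proof of Theorem \ref{closedrange}, this forces $|\omega|^q$ and $|\omega^*|^q$ to be continuous in $t$, hence the right-hand side of the Hamilton equation is continuous, and therefore $\omega\in C^1(\mathbb{R},\b_q(\h))$.

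Once $\omega$ is $C^1$, the Banach-space uniqueness statement for (\ref{eulerdiscr}) identifies $\omega$ with the unique continuously differentiable Hamilton flow starting at $\omega(0)$. I would then apply the proposition on the reversed Legendre transform established just above: the anti-transform $v$ of $\omega$ given by (\ref{elve}) is continuous on $\mathbb{R}$. Since $\nu(t)=v(t)$ almost everywhere (both coincide with $\gamma^{-1}\dot\gamma$ on its domain of definition) and $v$ is continuous, we can redefine $\nu$ on a null set to obtain a continuous version. Finally, $\dot{\gamma}=\gamma v$ is a product of continuous maps into $G_p(\h)$ and $\b_p(\h)$ respectively, so $\dot{\gamma}$ is continuous and $\gamma$ is $C^1$. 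The $v$ thus obtained then satisfies the Euler-Lagrange equation pointwise, as required.

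The step I expect to be the most delicate is the bootstrap from absolute continuity of $\omega$ to the spectral invariance of $|\omega|$: in infinite dimensions one only controls finite partial sums $\sum_{i\le k}p_i(t)$ via Riesz integrals, and the passage through the test-function argument with $Tr(f(b(t)E_k(t)))$ must be handled carefully at the level of a.e.-differentiability rather than $C^1$ smoothness. Once this spectral invariance is in hand, the remaining regularity upgrade and the inversion of the Legendre map are routine given the preceding proposition.
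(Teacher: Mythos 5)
Your proposal is correct and follows essentially the same route as the paper, which obtains the corollary by combining the preceding remark (extremals satisfy the Euler--Lagrange equation a.e., $\omega=\nu(\nu^*\nu)^{n-1}$ is absolutely continuous and, via the spectral-invariance argument and the Hamilton equation, $C^1$) with the proposition on continuity of the Legendre anti-transform $v$ in (\ref{elve}), so that $\dot\gamma=\gamma v$ is continuous and $\gamma$ is $C^1$. The delicate point you flag (running the spectral-invariance lemma in the a.e.\ setting) is exactly the step the paper handles implicitly by appealing to Remark \ref{extensiondelremark} and Remark \ref{esabso}.
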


\subsection{A differential equation for the spectral projections}

As before, let $b(t)=\sum\lambda_i p_i(t)$ stand for the smooth solution of the differential equation (\ref{auxdiscreto}), that is $b(t)=|w(t)|^2$, where $w(t)$ is a solution of the Euler-Lagrange equation (\ref{eulerdiscr}). Then $ 0<\lambda_i\in\mathbb R$ are the singular values of $w$, thus $\{\lambda_i\}_{i\ge 1}\in \ell^{\alpha}$, with $\alpha=q/2$. Since the spectrum of $|w(t)|$ is constant and discrete (accumulating only, eventually, at $\lambda_0=0$). Let $C_i$ be a circle centered at $\lambda_i$, with no other eigenvalue $\lambda_j$ in its interior. Then
$$
p_i=p_i(t)=\frac{1}{2\pi i}\oint_{C_i} (z-b)^{-1}dz
$$
$$
\dot{p_i}=\frac{1}{2\pi i}\oint_{C_i} (z-b)^{-1}\dot{b}(z-b)^{-1}dz=\frac{1}{2\pi i}\oint_{C_i} (z-b)^{-1}[K,b^{\alpha}](z-b)^{-1}dz=Ib^{\alpha}-b^{\alpha}I,
$$
with
$$
I=\frac{1}{2\pi i}\oint_{C_i} (z-b)^{-1}K(z-b)^{-1}dz.
$$
Now since
$$
(z-b)^{-1}=\left(z(p_0+p_0^{\perp})-\sum_{i\ge 1}\lambda_i p_i\right)^{-1}=z^{-1}+\sum_{i\ge 1} \frac{\lambda_i}{z(z-\lambda_i)}p_i,
$$
a straightforward computation shows that, for each $i\ge 1$,
\begin{equation}\label{peipunto}
\dot{p_i}=\lambda_i^{\alpha-1}\left( [K,p_i]+\sum_{l\ne i} \frac{\lambda_i^{1-\alpha}-\lambda_l^{1-\alpha}}{\lambda_i-\lambda_l}\lambda_l^{\alpha}\left\{p_iKp_l-p_lKp_i\right\}\right)
\end{equation}
{\em provided the sum over $l$ converges in ${\cal B}_q({\cal H})$}. This is the purpose of the next lemmas:

\begin{lem}\label{aco}
 Let $\alpha\in (1/2,1]$, let $\{\lambda_i\}_{i\ge 1}$ be a  decreasing sequence of strictly positive numbers. Then for each $i\ge 1$,
$$
\sup\limits_{l\ne i}\frac{\lambda_i^{1-\alpha}-\lambda_l^{1-\alpha}}{\lambda_i-\lambda_l}\lambda_l^{\alpha}\le 1.
$$
\end{lem}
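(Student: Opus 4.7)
The plan is to reduce the two-parameter supremum to a single-variable inequality by scaling. Setting $t=\lambda_l/\lambda_i>0$ (with $t\ne 1$), and factoring $\lambda_i^{1-\alpha}$ and $\lambda_i$ out of the numerator and denominator of the difference quotient respectively while writing $\lambda_l^\alpha = t^\alpha \lambda_i^\alpha$, a quick bookkeeping of powers of $\lambda_i$ shows that
\[
\frac{\lambda_i^{1-\alpha}-\lambda_l^{1-\alpha}}{\lambda_i-\lambda_l}\,\lambda_l^\alpha
= \frac{t^\alpha(1-t^{1-\alpha})}{1-t}
= \frac{t^\alpha - t}{1-t}.
\]
Thus the lemma reduces to proving that $h(t):=(t^\alpha-t)/(1-t)\le 1$ for every $t>0$, $t\ne 1$.

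I would then split on the sign of $1-t$. If $0<t<1$, then $1-t>0$ and the inequality $h(t)\le 1$ is equivalent to $t^\alpha-t\le 1-t$, i.e.\ $t^\alpha\le 1$, which is immediate since $0<t<1$ and $\alpha>0$. If $t>1$, then $1-t<0$ and multiplication flips the inequality, reducing $h(t)\le 1$ to $t^\alpha\ge 1$, which is again immediate since $t>1$ and $\alpha>0$.

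There is no real obstacle here: the bound holds for every $\alpha>0$, not only for $\alpha\in(1/2,1]$. The hypothesis $\alpha\in(1/2,1]$ is presumably needed not in this estimate but in the next step, where the constants $r_j$ (the multiplicities $\mathrm{Tr}(p_j)$) together with the $\ell^\alpha$-summability of $\{\lambda_j\}$ must be combined with this bound to guarantee convergence in $\b_q(\h)$ of the series in (\ref{peipunto}). For this lemma itself, the algebraic check above is the complete argument, and a short remark noting $\lim_{t\to 1}h(t)=1-\alpha$ (via L'H\^opital) makes clear that the bound $1$ is in fact not attained.
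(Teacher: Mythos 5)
Your argument is correct: the scaling substitution $t=\lambda_l/\lambda_i$ does reduce the quantity to $h(t)=(t^\alpha-t)/(1-t)$, and the two-case check ($t^\alpha\le 1$ for $t<1$, $t^\alpha\ge 1$ for $t>1$) settles the bound; the bookkeeping of powers of $\lambda_i$ is right. This is, however, a genuinely different route from the paper's. The paper does not rescale: it fixes $a=\lambda_i$, studies $f_a(t)=t^\alpha\frac{t^{1-\alpha}-a^{1-\alpha}}{t-a}$ on $[0,+\infty)$, and proves that $f_a$ is \emph{increasing} with $\lim_{t\to+\infty}f_a(t)=1$, by writing $f_a(t)=1-a^{1-\alpha}g(t)$ with $g(t)=\frac{t^\alpha-a^\alpha}{t-a}$ and showing $g$ is decreasing via the sign of its derivative and the auxiliary function $h(z)=(\alpha-1)z^\alpha-\alpha z^{\alpha-1}+1$, which has its global maximum $0$ at $z=1$ precisely because $\alpha\le 1$. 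Your proof is more elementary (no calculus, no monotonicity claim), it isolates the exact place where nothing beyond $\alpha>0$ is needed — confirming your remark that the restriction $\alpha\in(1/2,1]$ is imposed by the surrounding convergence argument for (\ref{peipunto}) rather than by this estimate — and your observation that $\lim_{t\to1}h(t)=1-\alpha$ matches the paper's value $f_a(a)=1-\alpha$. What the paper's argument buys in exchange is the slightly stronger structural statement that $f_a$ increases monotonically to its supremum $1$ at infinity, though that extra information is not used elsewhere; for the stated lemma your shorter algebraic verification is complete.
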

\begin{proof}
For given $\alpha\in (1/2,1]$, and $a>0$, consider the real function $f_a:[0,+\infty)\to\mathbb R$ given by
$$
f_a(t)=t^{\alpha}\frac{t^{1-\alpha}-a^{1-\alpha}}{t-a}.
$$
Note that $f_a(0)=0$  and that $f_a$ is continuous (we let $f_a(a)=\lim\limits_{t\to a} f_a(t)=1-\alpha$). On the other hand, it is easy to check that $\lim\limits_{t\to +\infty}f_a(t)=1$. We will show that $f_a$ is increasing, which will prove the lemma. To this end, write
$$
f_a(t)=1-a^{1-\alpha}g(t),
$$
with $g(t)=\frac{t^{\alpha}-a^{\alpha}}{t-a}$, where $g(a)=\alpha a^{\alpha-1}$ and $g:[0,+\infty)\to \mathbb R$ is again continuous. It suffices to show that $g$ is decreasing; since
$$
g'(t)=\frac{(\alpha-1)t^{\alpha}-\alpha at^{\alpha-1}+a^{\alpha} } {(t-a)^2}
$$
we only need to check that
\begin{equation}\label{numerator}
a^{-\alpha}\{(\alpha-1)t^{\alpha}-\alpha at^{\alpha-1}+a^{\alpha}\}=(\alpha-1)(t/a)^{\alpha}-\alpha (t/a)^{\alpha-1}+1
\end{equation}
is non-positive. Consider $h(z)=(\alpha-1)z^{\alpha}-\alpha z^{\alpha-1}+1$ and note that $h(0^+)=-\infty$, $h(1)=0$. Since
$$
h'(z)=\alpha(\alpha-1)z^{1-\alpha}(1-z),
$$
it follows that $h$ has a global maximum at $z=1$, and then $h\le 0$; clearly this implies that (\ref{numerator}) is non-positive.
\end{proof}

Then, to show the convergence of (\ref{peipunto}), it suffices to show the convergence in ${\mathcal B}_q({\mathcal H})$ of the sum
$$
\sum_l Kp_l,
$$
since $\|p_iKp_l\|_q\le \|Kp_l\|_q=\|p_l K\|_q$ and similarly for the term $\|p_lKp_i\|_q$. To prove that the above sum is convergent, we recall the following result (see for instance \cite[Theorem 2.16]{simonti}):

Let $a_n,a,b\in \bh$, with $b\ge 0$. Suppose that $|a_n|\le b$ and $|a_n^*|\le b$ for all $n$, $|a|\le b,|a^*|\le b$ and $a_n\to a$ weakly. If $p<\infty$ and $b\in {\mathcal B}_p({\mathcal H})$, then $\|a-a_n\|_p\to 0$.

\begin{coro}\label{kapei}
If $K$, $\{p_i\}$ are as in Remark \ref{sumasuma}, then $\sum_{i\ge 1}Kp_i\to K p_0^{\perp}=K$ in ${\mathcal B}_q({\mathcal H})$.
\end{coro}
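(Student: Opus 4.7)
The plan is to first exploit the identity $Kp_0 = 0$ established in Remark~\ref{sumasuma} to identify the tail of the series. Since $1 = p_0 + p_0^\perp$, one has $K = K p_0^\perp$, which proves the stated equality. Setting $P_n = \sum_{i=1}^n p_i$ and $Q_n = p_0^\perp - P_n = \sum_{i>n} p_i$, the orthogonality and summability of the $p_i$ guarantee that $P_n \to p_0^\perp$ strongly, equivalently $Q_n \downarrow 0$ in the strong operator topology. Then
$$
K - \sum_{i=1}^n K p_i \;=\; K(p_0^\perp - P_n) \;=\; K Q_n,
$$
so the task reduces to proving $\|KQ_n\|_q \to 0$ in $\b_q(\h)$.

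Next I would rewrite the Schatten norm in a form amenable to monotone convergence. Because $Q_n$ is a self-adjoint projection,
$$
\|KQ_n\|_q^q \;=\; \|Q_n K^*\|_q^q \;=\; \mathrm{Tr}\bigl(|Q_n K^*|^q\bigr) \;=\; \mathrm{Tr}\bigl((KQ_n K^*)^{q/2}\bigr),
$$
using $|Q_n K^*|^2 = K Q_n^2 K^* = K Q_n K^*$. Write $T_n := K Q_n K^* \ge 0$. The sequence $(T_n)_{n\ge 1}$ is decreasing in the operator order, as $T_n - T_{n+1} = K p_{n+1} K^* \ge 0$, and dominated by $K K^* \in \b_{q/2}(\h)$; this last inclusion follows from $\mathrm{Tr}((KK^*)^{q/2}) = \|K\|_q^q < \infty$. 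Moreover $T_n \to 0$ strongly, since $\|T_n \xi\| \le \|K\|\,\|Q_n K^*\xi\| \to 0$ for every $\xi \in \h$.

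The key analytic step is to pass from the strong convergence $T_n \to 0$ to the trace convergence $\mathrm{Tr}(T_n^{q/2}) \to 0$. Here the hypothesis $p \ge 2$ plays its decisive role: the conjugate exponent satisfies $q = p/(p-1) \le 2$, so $q/2 \in (0,1]$ and the power function $x \mapsto x^{q/2}$ is operator monotone on $[0,\infty)$. Consequently $T_n^{q/2}$ is also a decreasing sequence of positive operators, with uniform bound
$$
0 \;\le\; T_n^{q/2} \;\le\; (KK^*)^{q/2} \;\in\; \b_1(\h).
$$
Continuous functional calculus applied to the uniformly bounded, strongly null sequence $T_n$ yields $T_n^{q/2} \to 0$ strongly. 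Fixing any orthonormal basis $(\xi_j)$ of $\h$, dominated convergence applied to the summable bound $\langle (KK^*)^{q/2}\xi_j,\xi_j\rangle$ then gives $\mathrm{Tr}(T_n^{q/2}) = \sum_j \langle T_n^{q/2}\xi_j,\xi_j\rangle \to 0$, as required.

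The main obstacle I anticipate is precisely this passage from strong operator convergence of $T_n^{q/2}$ to convergence of the trace; the decreasing monotonicity afforded by $q/2 \le 1$ is essential, since without operator monotonicity of $x^{q/2}$ one could not iterate the domination by the trace-class operator $(KK^*)^{q/2}$. It is worth remarking that a direct application of the dominated convergence theorem cited from Simon's book to $a_n = K P_n$ and $a = K$ is not immediate: while one readily obtains $|a_n^*|^2 = K P_n K^* \le K K^*$ and hence $|a_n^*| \le |K^*|$, the analogous bound $|a_n|^2 = P_n K^* K P_n \le K^* K$ generally fails because $P_n$ need not commute with $K^* K$, so no obvious single operator $b \in \b_q(\h)$ dominates the whole sequence.
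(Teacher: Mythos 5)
Your proof is correct, but it takes a genuinely different route from the paper's. The paper disposes of the corollary in one line: it applies the dominated convergence theorem for trace ideals quoted from Simon (Theorem 2.16) to $a_n=\sum_{i=1}^n Kp_i$, $a=K$, with dominating operator $b=|K|$, and then uses $Kp_0=0$. You instead argue directly on the tails: writing $K-\sum_{i=1}^n Kp_i=KQ_n$ with $Q_n=\sum_{i>n}p_i$, passing to $\|KQ_n\|_q^q=\mathrm{Tr}\bigl((KQ_nK^*)^{q/2}\bigr)$, and using that $T_n=KQ_nK^*$ is positive, decreasing, dominated by $KK^*\in{\mathcal B}_{q/2}({\mathcal H})$ and strongly null, so that L\"owner--Heinz (operator monotonicity of $t\mapsto t^{q/2}$, valid because $q\le 2$) plus scalar dominated convergence over a fixed orthonormal basis gives $\mathrm{Tr}(T_n^{q/2})\to 0$. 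What each approach buys: the paper's argument is shorter and works under mere weak convergence of $a_n$, but, as you rightly point out, its domination hypothesis $|a_n|\le|K|$, i.e.\ $(P_n|K|^2P_n)^{1/2}\le|K|$, is not automatic when $P_n$ fails to commute with $|K|$ (only $|a_n^*|\le|K^*|=|K|$ is immediate from $P_n\le 1$), so the citation as used requires an extra verification or a different dominating operator; your argument sidesteps this entirely by exploiting the specific structure of the increments ($a-a_n=KQ_n$ with $Q_n$ a projection), at the modest cost of invoking operator monotonicity and a hands-on trace estimate. Your closing remark identifying this subtlety in the direct application of Simon's theorem is apt and worth keeping.
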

\begin{proof}
Apply the result above to $a_n=\sum_{i=1}^n Kp_i$, $a=K$, $b=|K|$, and then recall that $Kp_0=0$ thus $Kp_0^{\perp}=K$.
\end{proof}

\begin{coro}
For each $i\ge 1$, the expression for $\dot{p_i}$ given in (\ref{peipunto}) is convergent in $ {\mathcal B}_p({\mathcal H})$.
\end{coro}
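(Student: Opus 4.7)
The approach is to prove convergence in the stronger space $\b_q(\h)\subset\b_p(\h)$ (recall $q\le 2\le p$, so the inclusion is continuous), reducing each of the two infinite sums in (\ref{peipunto}) to Corollary \ref{kapei} by absorbing the weights
$$
c_l := \frac{\lambda_i^{1-\alpha}-\lambda_l^{1-\alpha}}{\lambda_i-\lambda_l}\,\lambda_l^{\alpha}
$$
into a single uniformly bounded operator, exploiting the pairwise orthogonality of the spectral projections $p_l$.

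First I would verify the uniform bound $|c_l|\le 1$ via Lemma \ref{aco}, valid because $\alpha=q/2\in(1/2,1]$ whenever $p\ge 2$. Since the $p_l$ are pairwise orthogonal projections, for any finite $F\subset\mathbb N$ the operator $\sum_{l\in F}c_l p_l$ is block-diagonal, so its spectral norm equals $\max_{l\in F}|c_l|\le 1$. Letting $E_n:=\sum_{l>n}p_l$ denote the tail projection, I would rewrite the tail of the first series as
$$
\sum_{l>n,\,l\ne i}c_l\,p_iKp_l \;=\; p_iK\,T_n,\qquad T_n := \sum_{l>n,\,l\ne i}c_l\,p_l \;=\; T_nE_n,
$$
so that
$$
\Big\|\sum_{l>n,\,l\ne i}c_l\,p_iKp_l\Big\|_q \;\le\; \|p_iKE_n\|_q\cdot\|T_n\|_{\infty} \;\le\; \|p_iKE_n\|_q.
$$

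By Corollary \ref{kapei}, $\sum_{l\ge 1}Kp_l\to K$ in $\b_q(\h)$; left-multiplying by the bounded $p_i$ and using $Kp_0=0$ from Remark \ref{sumasuma}, I get $\sum_{l\ge 1}p_iKp_l\to p_iKp_0^{\perp}=p_iK$ in $\b_q(\h)$. Consequently the tail $\|p_iKE_n\|_q=\|\sum_{l>n}p_iKp_l\|_q\to 0$, which proves convergence of the first series in $\b_q(\h)$. An entirely analogous argument, applying Corollary \ref{kapei} to $K^*$ and passing to adjoints (or equivalently, using the factorization $\sum_{l>n,l\ne i}c_l\,p_lKp_i=T_nKp_i=T_nE_nKp_i$), disposes of the second series $\sum c_l\,p_lKp_i$. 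Finally, the scalar multiplication by $\lambda_i^{\alpha-1}$ and the addition of the fixed $[K,p_i]\in\b_q(\h)$ preserve convergence.

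The main obstacle is that the weights $c_l$ vary with $l$, which prevents a direct appeal to Corollary \ref{kapei}; the key insight is that the pairwise orthogonality of $\{p_l\}$ lets us bundle $\sum_F c_l p_l$ into a single operator of spectral norm at most $\sup_l|c_l|\le 1$ (Lemma \ref{aco}), so the weights can be absorbed harmlessly into a uniform operator-norm factor while the Schatten-class control is provided by the tail projection $E_n$.
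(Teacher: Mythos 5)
Your proof is correct, and it rests on the same two ingredients as the paper --- the uniform bound on the coefficients from Lemma \ref{aco} and the convergence $\sum_l Kp_l\to K$ in ${\mathcal B}_q({\mathcal H})$ of Corollary \ref{kapei} --- but the way you pass from the unweighted to the weighted series is genuinely different. The paper's reduction is term-by-term: it invokes $\|p_iKp_l\|_q\le\|Kp_l\|_q$ (and similarly for $p_lKp_i$), which, read as a comparison test, would strictly require absolute convergence $\sum_l\|Kp_l\|_q<\infty$, whereas Corollary \ref{kapei} only provides norm convergence of the partial sums (via the dominated convergence theorem in trace ideals). You instead exploit the mutual orthogonality of the spectral projections to bundle the weights into a single contraction $T_n=T_nE_n$ supported on the tail projection $E_n=\sum_{l>n}p_l$, so the tails are dominated by $\|p_iKE_n\|_q\le\|KE_n\|_q$ and, for the second series, by $\|E_nKp_i\|_q\le\|E_nK\|_q=\|KE_n\|_q$ (using $K^*=-K$ and $Kp_0=0$ from Remark \ref{sumasuma}); and $\|KE_n\|_q\to0$ is exactly what Corollary \ref{kapei} delivers. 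This buys an estimate that needs only the norm convergence actually proved in the paper, at the cost of slightly more bookkeeping, and the final passage to ${\mathcal B}_p({\mathcal H})$ through the continuous inclusion ${\mathcal B}_q({\mathcal H})\subset{\mathcal B}_p({\mathcal H})$ (since $q\le 2\le p$) is consistent with the paper, which in fact establishes convergence in the smaller space ${\mathcal B}_q({\mathcal H})$. One cosmetic point: writing the tail as $p_iKT_n$ presupposes that the series already converges; it is cleaner to apply the same factorization to the differences of partial sums $\sum_{n<l\le m,\,l\ne i}$, which are finite sums, so that the Cauchy criterion in ${\mathcal B}_q({\mathcal H})$ is invoked explicitly.
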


We may use these facts to prove regularity results for $b(t)$ and $|w(t)|$. First note that since $b\in\b_\alpha(\h)$ and $\alpha<1$, then $b\in\b_1(\h)$. Moreover $\dot{b}=[b^\alpha,K]$, implies that  also $\dot{b}\in\b_1(\h)$

\begin{prop}
The map $b(t)=|w(t)|^2$ is continuously differentiable in $\b_1(\h)$, and
$$
\dot{b}(t)=\sum_{i\ge 1}\lambda_i \dot{p}_i(t)
$$
with the series convergent in $\b_1(\h)$.
\end{prop}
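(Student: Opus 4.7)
The plan is to approximate $b$ by the partial sums $b_N(t)=\sum_{i=1}^N\lambda_i p_i(t)$, verify that each $b_N$ is $C^1$ into $\b_1(\h)$ with derivative $\dot b_N=\sum_{i\le N}\lambda_i \dot p_i$, establish the uniform convergence $\dot b_N\to[b^\alpha,K]$ in $\b_1(\h)$, and conclude via the termwise differentiation theorem for Banach-valued maps. The differential equation $\dot b=[b^\alpha,K]$ has already been established, so the limit of $\dot b_N$ is identified from the outset; the substantive work lies in the trace-norm estimates.

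The preliminary facts are quickly assembled. Since $b\in\b_\alpha(\h)$ with $\alpha=q/2\le 1$, and the multiplicities $r_i=\dim p_i$ are constant in $t$ by the lemma preceding the statement, one has $b^\alpha\in\b_1(\h)$ with $\|b^\alpha\|_1=\sum_i\lambda_i^\alpha r_i<\infty$, and a fortiori $b(t)\in\b_1(\h)$ with
\[
\|b(t)-b_N(t)\|_1 \;=\;\sum_{i>N}\lambda_i r_i\;\longrightarrow\;0
\]
uniformly in $t$. Each Riesz projection $p_i(t)$ is continuously differentiable in operator norm and of finite rank $r_i$, so $p_i\in C^1(\mathbb R,\b_1(\h))$, and a uniform-tail argument for $b^\alpha(t)=\sum_i\lambda_i^\alpha p_i(t)$ shows that $t\mapsto[b^\alpha(t),K]$ is continuous in $\b_1(\h)$.

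The main technical step, and principal obstacle, is the uniform bound
\[
\|[b^\alpha(t),K]-\dot b_N(t)\|_1 \le C\,\|K\|_\infty\sum_{i>N}\lambda_i^\alpha r_i.
\]
Writing formula (\ref{peipunto}) in the simplified form $\dot p_i=\sum_{l\ne i}c_{il}(p_iKp_l-p_lKp_i)$ with $c_{il}=(\lambda_i^\alpha-\lambda_l^\alpha)/(\lambda_i-\lambda_l)$, and comparing $\dot b_N$ with $[b^\alpha,K]=\sum_{j\ne l}(\lambda_j^\alpha-\lambda_l^\alpha)p_jKp_l$, the symmetric identity $\lambda_j c_{jl}-\lambda_l c_{jl}=\lambda_j^\alpha-\lambda_l^\alpha$ collapses the difference into three sums indexed by pairs $(j,l)$ with $j\le N<l$, $l\le N<j$, or both $j,l>N$. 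For the first two, a mean-value bound $\lambda_l c_{jl}\le \alpha\lambda_l^\alpha$ when $j<l$ (from concavity of $\lambda\mapsto\lambda^\alpha$, since the $\lambda_i$ are decreasing), combined with the orthogonality identity $\|\sum_{j\le N}a_{j,l}p_j\|_\infty=\max_{j\le N}|a_{j,l}|$ and $\|Kp_l\|_1\le r_l\|K\|_\infty$, bounds each cross sum by $\alpha\|K\|_\infty\sum_{i>N}\lambda_i^\alpha r_i$. For the third sum, the key observation is the identity
\[
\sum_{j,l>N}(\lambda_j^\alpha-\lambda_l^\alpha)p_jKp_l \;=\; [(b^\alpha)_{>N},\,Q_N^\perp K Q_N^\perp],
\]
with $Q_N^\perp=\sum_{i>N}p_i$ and $(b^\alpha)_{>N}=\sum_{i>N}\lambda_i^\alpha p_i$, whose trace norm is at most $2\|(b^\alpha)_{>N}\|_1\|K\|_\infty=2\|K\|_\infty\sum_{i>N}\lambda_i^\alpha r_i$. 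All three tails are uniform in $t$.

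Applying the fundamental theorem of calculus to $b_N$, namely $b_N(t)=b_N(0)+\int_0^t\dot b_N(s)\,ds$, and passing to the limit in $\b_1(\h)$ yields $b\in C^1(\mathbb R,\b_1(\h))$ with $\dot b(t)=\sum_{i\ge 1}\lambda_i\dot p_i(t)$ convergent in $\b_1(\h)$. The principal difficulty throughout is that individual terms $p_jKp_l$ admit only the crude trace-norm estimate $\|p_jKp_l\|_1\le r_l\|K\|_\infty$, which is not summable over $(j,l)$; only the symmetric cancellation structure in the coefficients $c_{il}$, together with the bound provided by Lemma \ref{aco}, produces a tail controlled by $\|b^\alpha\|_1=\sum_i\lambda_i^\alpha r_i$.
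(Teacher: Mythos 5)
Your proof is correct, and it rests on the same two pillars as the paper's: termwise differentiation of the partial sums $b_N=\sum_{i\le N}\lambda_i p_i$ in $\b_1(\h)$, and a divided-difference bound for $(\lambda_i^\alpha-\lambda_l^\alpha)/(\lambda_i-\lambda_l)$ (your mean-value estimate is equivalent in content to Lemma \ref{aco}) combined with the rank estimate $\|p_iK\|_1\le r_i\|K\|_\infty$. Where you differ is in how the uniform $\b_1$-convergence is obtained. The paper never compares $\dot b_N$ with $[b^\alpha,K]$: it rewrites $\sum_{i\le k}\lambda_i\dot p_i$, via (\ref{peipunto}), as $[K,\sum_{i\le k}\lambda_i^\alpha p_i]+\sum_{i\le k}\lambda_i^\alpha p_iKg_i-\sum_{i\le k}\lambda_i^\alpha g_iKp_i$ with $g_i=\sum_{l\ne i}\gamma_{i,l}p_l$ and $\|g_i\|\le 1$, so the series of derivatives converges \emph{absolutely} and uniformly in $\b_1(\h)$ and the limit need not be identified at all. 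You instead take the limit $[b^\alpha(t),K]$ as given (legitimate, since the equation for $b$ is already established) and estimate $\|[b^\alpha,K]-\dot b_N\|_1\le C\|K\|_\infty\sum_{i>N}\lambda_i^\alpha r_i$ by an index-block decomposition: exact cancellation on the block $j,l\le N$, the mean-value bound on the two cross blocks, and a commutator identity on the block $j,l>N$. This is more bookkeeping than the paper's absolute-convergence argument, but it buys an explicit tail rate and the identity $\sum_i\lambda_i\dot p_i=[b^\alpha,K]$. One caveat you should make explicit: your ``simplified form'' $\dot p_i=\sum_{l\ne i}c_{il}(p_iKp_l-p_lKp_i)$ reproduces (\ref{peipunto}) only if the sum runs over \emph{all} spectral indices including the kernel, i.e.\ $l=0$ with $\lambda_0=0$ (this is precisely the $\lambda_i^{\alpha-1}[K,p_i]$ term of the paper's formula). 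If those terms are dropped, the coefficients of $p_jKp_0$ and $p_0Kp_l$ in $[b^\alpha,K]-\dot b_N$ do not tend to zero and the claimed cancellation fails; keeping them costs nothing, since $\|p_jKp_0\|_1\le r_j\|K\|_\infty$ and they fit into your cross-block estimates with constant $1$ in place of $\alpha$, but the point needs to be stated.
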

\begin{proof}
It follows from (\ref{peipunto}), that if we denote
$$
\gamma_{i,l}=\frac{\lambda_i^{1-\alpha}-\lambda_l^{1-\alpha}}{\lambda_i-\lambda_l}\lambda_l^{\alpha}
$$
and
$$
g_i=\sum_{l\ne i}\gamma_{i,l}p_l,
$$
then
$$
\sum_{i=1}^k \lambda_i\dot{p}_i=[K,\sum_{i=1}^k\lambda_i^\alpha p_i]+\sum_{i=1}^k  \lambda_i^\alpha p_i Kg_i-\sum_{i=1}^k \lambda_i^\alpha g_iKp_i.
$$
Note that, by Lemma \ref{aco}
$$
\|g_i\|=\sup_{l\ne i}|\gamma_{i,l}|\le 1.
$$
The first term above is a partial sum of the series $[K,\sum_{i\ge 1}\lambda_i^\alpha p_i]=[K,b^\alpha]$ which converges absolutely and uniformly in $\b_1(\h)$. The second sum is bounded by
$$
\|\sum_{i=1}^k  \lambda_i^\alpha p_i Kg_i\|_1\le \sum_{i=1}^k  \lambda_i^\alpha \|p_i\|_1\| Kg_i\|\le \sum_{i=1}^k  \lambda_i^\alpha \|p_i\|_1<\infty.
$$
The third term is dealt analogously. Then $\sum_{i\ge 1} \lambda_i\dot{p}_i$ converges absolutely and uniformly in $\b_1(\h)$. Since $b(t)=\sum_{i=1} \lambda_ip_i$ also converges absolutely and uniformly in $\b_1(\h)$, it follows that $b$ is differentiable in $\b_1(\h)$ and its derivative is $\sum_{i\ge 1} \lambda_i\dot{p}_i$, which is clearly continuous.
\end{proof}
Note that the operators $g_i$ in the above proof are positive and  belong to $\b_\alpha(\h)$. Indeed, in the notations of Lemma \ref{aco}
$$
g_i=\sum_{l\ne i}f_{\lambda_i}(\lambda_l)p_l,
$$
where $f_{\lambda_i}$ is continuous in $\mathbb{R}_{\ge 0}$ with $f_{\lambda_i}(0)=0$, and clearly
$$
f_{\lambda_i}(t)=\frac{t-t^\alpha \lambda_i^{1-\alpha}}{\lambda_i-t}\sim o(t^\alpha),
$$
i.e. $f_{\lambda_i}(\lambda_l)\sim o(\lambda_l^\alpha)$.

\bigskip

\noindent

E. Andruchow, G. Larotonda and A. Varela\\
Instituto de Ciencias, Universidad Nacional de General Sarmiento,\\
J. M. Gutierrez 1150, (B1613GSX) Los Polvorines \\
Argentina\\
and \\
Instituto Argentino de Matem\'atica, CONICET \\
Saavedra 15, 3er. piso, (C1083ACA) Buenos Aires\\
Argentina\\
e-mails: eandruch@ungs.edu.ar, glaroton@ungs.edu.ar, avarela@ungs.edu.ar\\

\medskip

L. Recht\\
Universidad Sim\'on Bol\'\i var \\
Apartado 89000, Caracas 1080A,\\
 Venezuela\\
e-mail: recht@usb.ve.

\end{document}